\documentclass{amsart}
\usepackage{graphicx}
\usepackage{amssymb}
\setlength{\textheight}{43pc}
\setlength{\textwidth}{28pc}
\vfuzz2pt 
\hfuzz2pt 
\newtheorem{thm}{Theorem}[section]
\newtheorem{cor}[thm]{Corollary}
\newtheorem{lem}[thm]{Lemma}

\theoremstyle{definition}
\newtheorem{defn}[thm]{Definition}
\theoremstyle{remark}
\newtheorem{rem}[thm]{Remark}
\numberwithin{equation}{section}

\newcommand{\abs}[1]{\left\vert#1\right\vert}
\newcommand{\set}[1]{\left\{#1\right\}}

\newcommand{\eps}{\varepsilon}

\newcommand{\dbar}{\bar\partial}

\newcommand{\bd}{\partial}
\newcommand{\til}{\tilde}

\newcommand{\R}{\mathbb{R}}
\newcommand{\p}{\pi}
\newcommand{\grad}{\nabla}

\DeclareMathOperator{\re}{Re}
\DeclareMathOperator{\im}{Im}

\begin{document}

\title[Hartogs Domains and the Diederich-Forn{\ae}ss Index]{Hartogs Domains and the Diederich-Forn{\ae}ss Index}%
\author{Muhenned Abdulsahib}%
\address{University of Dhi-Qar}%
\email{mabdulsa@email.uark.edu}%

\author{Phillip S. Harrington}%
\address{SCEN 309, 1 University of Arkansas, Fayetteville, AR 72701}%
\email{psharrin@uark.edu}%

\subjclass[2010]{32U10, 32T27}

\begin{abstract}
We study a geometric property of the boundary on Hartogs domains which can be used to find upper and lower bounds for the Diederich-Forn{\ae}ss Index. Using this, we are able to show that under some reasonable hypotheses on the set of weakly pseudoconvex points, the Diederich-Forn{\ae}ss Index for a Hartogs domain is equal to one if and only if the domain admits a family of good vector fields in the sense of Boas and Straube. We also study the analogous problem for a Stein neighborhood basis, and show that under the same hypotheses if the Diederich-Forn{\ae}ss Index for a Hartogs domain is equal to one then the domain admits a Stein neighborhood basis.
\end{abstract}
\maketitle

\tableofcontents


\section{Introduction }

Let $\Omega\subset\mathbb{C}^n$ be a bounded pseudoconvex domain with $C^2$ boundary.  Recall that $\Omega$ is pseudoconvex if and only if $-\log\delta$ is a plurisubharmonic function on $\Omega$, where $\delta(z)$ denotes the distance from $z$ to the boundary of $\Omega$.  Note that the signed distance function $\tilde\delta$ (i.e., $\tilde\delta=\delta$ on $\Omega^c$ and $\tilde\delta=-\delta$ on $\Omega$) is $C^2$ near the boundary of $\Omega$ by a result of Krantz and Parks \cite{KrPa81}, so the level curves of $-\log\delta$ near $b\Omega$ form an exhaustion of $\Omega$ by $C^2$ pseudoconvex domains.  We say that $-\log\delta$ is an unbounded plurisubharmonic exhaustion function for $\Omega$.

In \cite{DiFo77b}, Diederich and Forn{\ae}ss proved that every bounded pseudoconvex domain with $C^2$ boundary also admits a bounded plurisubharmonic exhaustion function.  In particular, there exists an exponent $0<\tau<1$ and a $C^2$ defining function $\rho$ for $\Omega$ such that $-(-\rho)^\tau$ is strictly plurisubharmonic on $\Omega$.  Recall that $\rho$ is a defining function for $\Omega$ if $\Omega=\{z:\rho(z)<0\}$ and $d\rho\neq 0$ on $b\Omega$.  Since we will be working on smooth domains, we call $\tau$ a Diederich-Forn{\ae}ss exponent for $\Omega$ if there exists a smooth defining function $\rho$ for $\Omega$ such that $-(-\rho)^\tau$ is plurisubharmonic on $\Omega$, and let the Diederich-Forn{\ae}ss Index of $\Omega$ denote the supremum over all Diederich-Forn{\ae}ss exponents.  In \cite{DiFo77a}, Diederich and Forn{\ae}ss show that for any $0<\tau<1$, there exists a bounded pseudoconvex domain $\Omega$ with smooth boundary such that $\tau$ is not a Diederich-Forn{\ae}ss exponent for $\Omega$.  These domains have come to be known as worm domains, and we will examine them more closely in a later section (see Definition \ref{worm} below).

Much recent interest in the Diederich-Forn{\ae}ss Index has stemmed from its connections to regularity for the Bergman Projection.  The Bergman Projection $P$ for $\Omega$ is the orthogonal projection from $L^2(\Omega)$ onto the closed subspace of $L^2$ holomorphic functions.  We say that a smooth domain $\Omega$ satisfies Condition $R$ if $P$ preserves the space $C^\infty(\overline\Omega)$.  Extending a result of Fefferman \cite{Fef74}, Bell and Ligocka \cite{BeLi80} proved that if $f:\Omega_1\rightarrow\Omega_2$ is a biholomorphic map and $\Omega_1$ and $\Omega_2$ are smooth, bounded, pseudoconvex domains satisfying Condition $R$, then $f$ extends smoothly to the boundary of $\Omega_1$.  Given that the Riemann Mapping Theorem fails in several complex variables, this result gives us a critical tool to evaluate biholomorphic equivalence classes of domains by showing that in some cases a biholomorphism must preserve the geometry of the boundary (in contrast to the one variable case).  For example, a biholomorphism which extends to the boundary must preserve the signature of the Levi form for the boundary, while there is no corresponding curvature which must be preserved in one variable.  Unfortunately, Condition $R$ is known to fail on the worm domains of Diederich and Forn{\ae}ss by work of Barrett \cite{Bar92} and Christ \cite{Chr96}.

Under an additional technical hypothesis, it is known that Condition $R$ is satisfied when the Diederich-Forn{\ae}ss Index is equal to one by work of Kohn \cite{Koh99}, Harrington \cite{Har11}, and Pinton and Zampieri \cite{PiZa14}.  Hence, there is great interest in identifying cases in which the Diederich-Forn{\ae}ss Index is equal to one.  For example, Forn{\ae}ss and Herbig have shown that it suffices for $\Omega$ to admit a defining function which is plurisubharmonic on $b\Omega$ \cite{FoHe07,FoHe08}.  Other recent results have been obtained in \cite{KLP16}, \cite{Liu17b} and \cite{Har18}.

Our goal in this paper is to better understand some open questions regarding the Diederich-Forn{\ae}ss Index by considering the special case of Hartogs domains.  A domain $\Omega\subset\mathbb{C}^2$ is said to be Hartogs if
\begin{equation}
\label{eq:Hartogs}
  (e^{i\theta}z,w)\in\Omega\text{ whenever }(z,w)\in\Omega\text{ and }\theta\in\mathbb{R}.
\end{equation}
The symmetry of such domains makes them relatively tractable for analysis, so they are frequently studied to gain a deeper understanding of phenomena that are difficult to approach on more general classes of domains (for example, see \cite{BoSt92}, \cite{FuSt02}, or \cite{ChFu05}).  The worm domain of Diederich and Forn{\ae}ss is a Hartogs domain, and we will see that we can use our results to recover recent results on the worm domain due to Liu \cite{Liu17a} and Yum \cite{Yum18}.

We will see in this paper that, as with the worm domain, the presence of an annulus in the boundary of a Hartogs domain plays a critical role in the value of the Diederich-Forn{\ae}ss Index.  Our primary innovation is to show that there is a curvature term on such an annulus which can be used to find upper and lower bounds for the Diederich-Forn{\ae}ss Index.  We use $\tilde\delta$ to denote the signed distance function, i.e., $\tilde\delta(z)=-\delta(z)$ on $\overline\Omega$ and $\tilde\delta(z)=\delta(z)$ outside of $\overline\Omega$.  The result of Krantz and Parks \cite{KrPa81} shows that $\tilde\delta$ is $C^2$ in a neighborhood of $b\Omega$.  One can check that second derivatives of $\tilde\delta$ can be derived from the second fundamental form of $b\Omega$, and hence measure the extrinsic curvature of $b\Omega$.  We will see that in the coordinates given by \eqref{eq:Hartogs}, $\frac{\partial^2\tilde\delta}{\partial w\partial\bar z}$ is the critical term to study on an annulus in the boundary.  Since this term is obtained from the Hessian of the signed distance function, it depends on the second fundamental form of the boundary, so it is a geometric invariant.  In particular, if $2\re\left(e^{i\theta}\frac{\partial}{\partial w}\right)$ is the normal vector at a point $p$ in the annulus for some $\theta\in\mathbb{R}$, then $2\re\left(e^{i\theta}\frac{\partial}{\partial w}\right)\frac{\partial\tilde\delta}{\partial\bar z}(p)=0$ (this will follow from \eqref{eq:gradd} below).  Hence
\begin{align*}
  \frac{\partial^2\tilde\delta}{\partial w\partial\bar z}(p)&=i e^{-i\theta}\left(\frac{1}{2}\im\left(e^{i\theta}\frac{\partial^2\tilde\delta}{\partial w\partial x}(p)\right)+\frac{i}{2}\im\left(e^{i\theta}\frac{\partial^2\tilde\delta}{\partial w\partial y}(p)\right)\right)\\
  &=i e^{-i\theta}\left(\frac{1}{2}\mathrm{II}_p\left(\im\left(e^{i\theta}\frac{\partial}{\partial w}\right),\frac{\partial}{\partial x}\right)+\frac{i}{2}\mathrm{II}_p\left(\im\left(e^{i\theta}\frac{\partial}{\partial w}\right),\frac{\partial}{\partial y}\right)\right),
\end{align*}
where $z=x+iy$ and $\mathrm{II}_p$ denotes the second fundamental form at $p$.  In particular,
\[
  \abs{\frac{\partial^2\tilde\delta}{\partial w\partial\bar z}(p)}^2=\frac{1}{4}\abs{\mathrm{II}_p\left(\im\left(e^{i\theta}\frac{\partial}{\partial w}\right),\frac{\partial}{\partial x}\right)}^2+\frac{1}{4}\abs{\mathrm{II}_p\left(\im\left(e^{i\theta}\frac{\partial}{\partial w}\right),\frac{\partial}{\partial y}\right)}^2,
\]
so in some sense $\abs{\frac{\partial^2\tilde\delta}{\partial w\partial\bar z}(p)}$ measures the pairing of the maximal complex sub-bundle of the tangent space (i.e., the span of $\frac{\partial}{\partial x}$ and $\frac{\partial}{\partial y}$ at $p$) with the totally real tangent vector that is orthogonal to this space (i.e., $2\im\left(e^{i\theta}\frac{\partial}{\partial w}\right)$) with respect to the second fundamental form.  We will see (see \eqref{alfi} below) that $\frac{\partial^2\tilde\delta}{\partial w\partial\bar z}$ also corresponds to the coefficients of D'Angelo's one-form $\alpha$, which is know to be a crucial in the study of Condition $R$ (see Section 5.9 in \cite{Str10} for more information).

Our first result is an upper bound for the Diederich-Forn{\ae}ss Index:
\begin{thm}\label{main}
Let $\Omega\subset\mathbb{C}^2$ be a Hartogs domain with smooth boundary, and suppose that for some $w\in\mathbb{C}$, $B>A>0$ and $C>0$ the annulus $M =\{(z,w):\ A\leq\abs{z}^2 \leq B\}$ is in $b\Omega$ and $\abs{\frac{\bd^2\til{\delta}}{\bd w\bd \bar{z}}}>\frac{C}{\abs{ z}}$ on $M$.
Then the Diederich-Forn{\ae}ss Index is at most $\frac{ \pi}{2 C \log \frac{ B}{A}+\pi}$.
  \end{thm}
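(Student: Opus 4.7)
The plan is to convert the plurisubharmonicity of $-(-\rho)^{\tau}$ into a Riccati inequality along $M$, then invoke Sturm comparison. Suppose $\tau$ is a Diederich-Forn{\ae}ss exponent with smooth defining function $\rho=h\til\delta$, $h>0$. Dividing $i\partial\bar\partial\bigl(-(-\rho)^{\tau}\bigr)\ge 0$ by the positive factor $\tau(-\rho)^{\tau-1}$ gives
\[
(1-\tau)\frac{|\partial\rho(T)|^{2}}{-\rho}+\partial\bar\partial\rho(T,\bar T)\ge 0 \qquad\text{on }\Omega
\]
for every $(1,0)$-vector $T$; equivalently, the Hermitian form on the left (call it $\mathcal{Q}$) is positive semidefinite. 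Using $|\grad\til\delta|\equiv 1$ near $b\Omega$ together with the Hartogs invariance of $\til\delta$, one sees that on $M$
\[
\til\delta_{z}\equiv 0,\qquad \til\delta_{w}=\tfrac12 e^{i\mu(|z|^{2})},
\]
for a smooth real $\mu$, and $\til\delta_{w\bar z}=\tfrac{i}{2}\mu'(|z|^{2})\,z\,e^{i\mu(|z|^{2})}$ on $M$; so the hypothesis $|\til\delta_{w\bar z}|>C/|z|$ becomes $|\mu'(s)|>2C/s$ on $[A,B]$.

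The heart of the argument is to evaluate $\mathcal{Q}\ge 0$ in the frame $(\partial_{z},\partial_{w})$ at points $p_{t}=(z_{0},w_{0}-te^{-i\mu_{0}})\in\Omega$ for small $t>0$ (these lie along the inward normal to $b\Omega$ at $(z_{0},w_{0})\in M$). Expanding in $t$, the diagonal entries satisfy $M_{11}=O(t)$ and $M_{22}\sim (1-\tau)h/(4t)$ while $M_{12}=O(1)$, so that $\det\mathcal{Q}\ge 0$ in the limit $t\to 0^{+}$ is the binding constraint. Combining this with the auxiliary identity
\[
\re\bigl(e^{-i\mu}\,\til\delta_{z\bar z w}\bigr)=-\tfrac12 |z|^{2}\bigl(\mu'(|z|^{2})\bigr)^{2}\qquad\text{on }M,
\]
(obtained by differentiating $|\grad\til\delta|^{2}=1$ twice and using the formulas of the previous paragraph) and completing the square collapses the leading-order condition to the pointwise inequality
\[
(1-\tau)\eta_{z\bar z}+\tau\bigl|\eta_{z}-i\mu'(|z|^{2})\bar z\bigr|^{2}\le 0\qquad\text{on }M,\quad \eta=\log h.
\]
Averaging $\eta$ over the Hartogs $S^{1}$-action in $z$ and applying Jensen's inequality to the quadratic term, I may assume $\eta$ is Hartogs-invariant on $M$. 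Setting $F(u)=s\eta'(s)$ and $\Phi(u)=s\mu'(s)=d\mu/du$ in $u=\log s$, this inequality becomes the Riccati inequality
\[
F'(u)+k\bigl(F(u)^{2}+\Phi(u)^{2}\bigr)\le 0,\qquad k=\tfrac{\tau}{1-\tau},\quad u\in[\log A,\log B],
\]
with $|\Phi(u)|>2C$ throughout.

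To finish, set $v(u)=\exp\!\Bigl(\int_{\log A}^{u}kF(u')\,du'\Bigr)$, which is smooth and positive. The substitution $v'/v=kF$ converts the Riccati inequality into $v''\le -k^{2}\Phi^{2}v\le -(2kC)^{2}v$ on $[\log A,\log B]$ with $v>0$. Sturm comparison with the equation $w''+(2kC)^{2}w=0$ then forces
\[
\log(B/A)<\frac{\pi}{2kC}=\frac{\pi(1-\tau)}{2C\tau},
\]
which rearranges to $\tau<\pi/(2C\log(B/A)+\pi)$. Taking the supremum over Diederich-Forn{\ae}ss exponents yields the stated bound.

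The main obstacle is the second paragraph: carefully tracking the leading-order coefficients of $\mathcal{Q}$ in $t$, applying the Hartogs identity for $\til\delta_{z\bar z w}$, and completing the square to produce the clean expression $\bigl|\eta_{z}-i\mu'\bar z\bigr|^{2}$. Once the sharp inequality on $\eta$ is established on $M$, the Hartogs averaging together with Sturm comparison delivers the precise constant $\pi$ in the denominator.
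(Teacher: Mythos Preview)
Your argument is correct and follows essentially the same route as the paper's: both reduce the plurisubharmonicity of $-(-h\til\delta)^{\tau}$ to a $2\times 2$ positivity condition on $M$, extract the determinant as the binding constraint, and convert the resulting radial inequality into a second-order ODE $v''+v<0$ whose positive solutions cannot persist on an interval of length $\pi$. The only organizational differences are that the paper averages $h$ over the $S^{1}$-action at the outset (preserving plurisubharmonicity) rather than averaging $\eta=\log h$ via Jensen afterward, linearizes with $g=\hat h^{1/(1-\tau)}$ instead of passing through the Riccati form and $v=e^{k\eta}$, and cites Diederich--Forn{\ae}ss's Theorem~6 where you invoke Sturm comparison.
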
	

To establish a lower bound for the Diederich-Forn{\ae}ss Index, we will find it helpful to place a mild regularity condition on the set of weakly pseudoconvex points.
\begin{defn}
\label{defn:regular_points}
  Let $\Omega\subset\mathbb{C}^2$ be a Hartogs domain with coordinates given by \eqref{eq:Hartogs}.  Let $W\subset b\Omega$ denote the set of weakly pseudoconvex points, and let
  \begin{align*}
    M_1&=\left\{p\in W:\frac{\partial\tilde\delta}{\partial z}(p)=0\right\},\\
    M_2&=\left\{p\in W:\frac{\partial\tilde\delta}{\partial z}(p)\neq 0\right\}.
  \end{align*}
  Note that we could replace $\tilde\delta$ with any $C^1$ defining function $\rho$ without altering the definitions of $M_1$ and $M_2$.  We say that $\Omega$ has regular weakly pseudoconvex points if $M_1\cap\bar M_2=\emptyset$, $M_1$ has finitely many connected components, and each connected component $K$ of $M_1$ has either of the following two properties:
  \begin{enumerate}
    \item If $(z,w)\in K$, then $(rz,w)\in K$ for all $0\leq r\leq 1$.
    \item If $(z,w)\in K$, then $(0,w)\notin K$.
  \end{enumerate}
  In case $(1)$, we say that $K$ is disk-like, and in case $(2)$, we say that $K$ is annulus-like.
\end{defn}
  We note that condition (1) can be weakened considerably, since there are more general regions on which Lemma 3.1 holds, but we have adopted the present definition for the sake of clarity.

  To illustrate this condition, we suppose that $\Omega$ is a complete Reinhardt domain, i.e., if $(z,w)\in\Omega$ then $(r_1 e^{i\theta_1}z,r_2 e^{i\theta_2}w)\in\Omega$ for all $0\leq r_1,r_2\leq 1$ and $\theta_1,\theta_2\in\mathbb{R}$.  A complete Reinhardt domain is known to be pseudoconvex if and only if $\log\tau(\Omega)=\{(\log|z|,\log|w|):(z,w)\in\Omega\}$ is a convex subset of $\mathbb{R}^2$ (see Theorem 3.28 in \cite{Ran86}, for example).  Using this characterization, one easily checks that on a pseudoconvex complete Reinhardt domain, $M_1$ has at most one connected component and this component must be disk-like, while $M_2$ consists of the remaining points $(z,w)\in b\Omega$ such that $b\log\tau(\Omega)$ is weakly convex at $(\log|z|,\log|w|)$ (except for possible points in $M_2$ at which $w=0$).  Hence, $M_1\cap\bar M_2=\emptyset$ whenever $M_2$ contains only finitely many connected components (since we always assume that $\Omega$ has smooth boundary). While $M_1$ may have a more complicated structure on general Hartogs domains, it would be interesting to know if it remains true that $\Omega$ has regular weakly pseudoconvex points whenever $M_1$ and $M_2$ both have only finitely many connected components, as is true in the case of complete Reinhardt domains.

With this additional hypothesis, we have the following:
  \begin{thm}\label{tctokh}
  	Let $\Omega\subset\mathbb{C}^2$ be a Hartogs domain with smooth boundary and regular weakly pseudoconvex points, and suppose that every   annulus-like connected component $K$ of $\left\{p\in b\Omega:\frac{\partial\tilde\delta}{\partial z}(p)=0\text{ and }\frac{\partial^2\tilde\delta}{\partial z\partial\bar z}(p)=0\right\}$ admits constants $B\geq A>0$ and $C>0$ such that $A\leq\abs{z}^2 \leq B$ on $K$ and $\abs{\frac{\bd^2\til{\delta}}{\bd w\bd \bar{z}}}\leq \frac{C}{\abs{ z}}$ on $K$. Then the Diederich-Forn{\ae}ss Index is at least $\frac{ \pi}{2 C \log \frac{ B}{A}+\pi}$.
  \end{thm}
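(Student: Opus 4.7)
The plan is to construct a smooth defining function of the form $\rho = \tilde\delta\,e^{h}$, for some bounded smooth function $h$ on $\overline\Omega$, so that $-(-\rho)^\tau$ is plurisubharmonic on $\Omega$ with $\tau = \pi/(2C\log(B/A)+\pi)$. A direct computation reduces the plurisubharmonicity of $-(-\rho)^\tau$ to the Hermitian inequality
\[
  -\rho\,\partial\bar\partial\rho + (1-\tau)\,\partial\rho\otimes\bar\partial\rho \;\geq\; 0 \qquad \text{on } \Omega,
\]
and, since $-\rho$ vanishes on $b\Omega$, it must be checked to first order in $\tilde\delta$ near the boundary. This yields a family of pointwise matrix inequalities on $b\Omega$ that relate $\partial\bar\partial\tilde\delta$, $\partial h$, and $\tau$.

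The regular-weakly-pseudoconvex-points hypothesis lets one treat different regions of $b\Omega$ separately. At strongly pseudoconvex points the inequality is automatic for $h\equiv 0$. On $M_2$, where $\partial\tilde\delta/\partial z \neq 0$, the Hartogs rotation vector has a nondegenerate complex tangential component, so the $\partial\rho\otimes\bar\partial\rho$ term already provides enough positivity with a small choice of $h$. The separation assumption $M_1 \cap \bar M_2 = \emptyset$ is precisely what is needed to glue the local choices of $h$ smoothly across the two regions. On each disk-like component of $M_1$, the Lemma~3.1 referenced in the excerpt produces the required $h$, since such a component is foliated by holomorphic disks whose boundary conditions extend smoothly through $z=0$. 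The crux of the proof, and the origin of the specific value of $\tau$, is the annulus-like case.

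Fix an annulus-like component $K$ and take $h$ Hartogs-symmetric, $h = h(r,w)$ with $r = |z|$. The Hartogs symmetry forces $\partial^2\tilde\delta/(\partial w\partial\bar z) = (\bar z/r)\,g(r,w)$ for some smooth $g$ with $|g|\leq C$ on $K$. Plugging the null direction $L = \partial/\partial z$ and the complex normal $\partial/\partial w$ into the matrix inequality, and using that $\partial\tilde\delta/\partial z$ and $\partial^2\tilde\delta/\partial z\partial\bar z$ both vanish on $K$, one reduces to a scalar ODE-type condition for $h$ in the variable $s = \log|z|^2$, which varies over an interval of length $\log(B/A)$. The maximal $\tau$ for which this condition admits a bounded smooth solution on such an interval---compatibly with a smooth extension past the endpoints of $K$ to match the already-constructed $h$ on the rest of $b\Omega$---is exactly the largest $\tau$ obeying $\tfrac{\pi}{2}(1-\tau) \geq \tau\,C\log(B/A)$, which rearranges to $\tau = \pi/(2C\log(B/A)+\pi)$. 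A partition of unity adapted to the finitely many connected components of $M_1$ then glues the local constructions into a global smooth defining function $\rho$ of the required form.

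The main obstacle is the annulus-like step: extracting the sharp bound $\tfrac{\pi}{2}(1-\tau) \geq \tau\,C\log(B/A)$ from the matrix inequality requires a careful first-order Taylor expansion of $\rho$ in $\tilde\delta$ at $K$ together with full use of the Hartogs symmetry to decouple the tangential $z$-direction from the normal $w$-direction. The $\pi/2$ factor is the expected source of difficulty, since it must arise as the maximal phase one can allow at the endpoints of the interval of length $\log(B/A)$, and getting this phase optimally involves a delicate trigonometric/variational argument that precisely matches the endpoint compatibility conditions to the one-dimensional optimization for $h$.
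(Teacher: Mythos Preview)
Your outline follows the same architecture as the paper's proof: decompose the weak set into disk-like, annulus-like, and $M_2$ pieces, construct a local weight on each, and patch. The parametrization $\rho=\tilde\delta\,e^h$ is equivalent to the paper's $\sigma=-\hat h(-\tilde\delta)^\tau$ after identifying $\hat h=e^{\tau h}$, and the paper likewise reduces the annulus case to a second-order ODE in $s=\log|z|^2$.

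Two places in your sketch are genuine gaps rather than mere omissions. First, on the annulus you assert that the maximal $\tau$ admitting a bounded solution is the stated value, but you never exhibit the solution; the paper does this explicitly by setting $\hat h=g^{1-\tau}$, which \emph{linearizes} the determinant condition $ab-|c|^2\ge 0$ into
\[
  (1-\tau)^2\Bigl(\frac{\partial g}{\partial t}+t\,\frac{\partial^2 g}{\partial t^2}\Bigr)+4\tau^2 t\,|\rho_{tw}|^2\,g\le 0,
\]
whose positive solutions on $[A,B]$ are exactly $g(t)=\sin\bigl(\tfrac{2\tau C}{1-\tau}\log t+\phi\bigr)$, and these stay positive precisely when $\tfrac{2\tau C}{1-\tau}\log(B/A)<\pi$. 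Without this linearization (or an equivalent trick) the ``ODE-type condition'' you obtain in the exponent $h$ is nonlinear and the sharp threshold is not visible; your acknowledged uncertainty about the $\pi/2$ is a symptom of not having found this change of variable.

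Second, your final patching by ``a partition of unity adapted to the finitely many connected components'' does not work as stated: convex combinations of defining functions with DF exponent $\tau$ need not retain exponent $\tau$, and your ``endpoint compatibility'' framing forces you to match $h$ across components, which is both unnecessary and hard. The paper avoids this entirely via a separate lemma (Lemma~\ref{kono}) that patches any local $\rho_1$ across strictly pseudoconvex regions by interpolating with a strictly plurisubharmonic local defining function; the point is that on a strictly pseudoconvex set every $0<\tau<1$ is attainable, so the interpolation costs nothing. Similarly, on $M_2$ the paper does not argue with the rotation vector but constructs an explicit plurisubharmonic defining function $\rho=|z|^2/f(w)-1$ (Lemma~\ref{fono}) and then invokes Forn{\ae}ss--Herbig; your one-sentence treatment of $M_2$ would need to be expanded substantially to stand on its own.
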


If $\Omega$ is a worm domain (see Definition \ref{worm}), this allows us to recover a result of Liu \cite{Liu17a}.
\begin{cor}
\label{cor:worm_domain}
  Let $ \Omega_r$ be a worm domain with weakly pseudoconvex points given by the annulus $M_r=\{(z,w):1\leq\abs{z}\leq r, w=0\}$ for some $r>1$. Then the Diederich-Forn{\ae}ss Index for $\Omega_r$ is equal to $\frac{ \pi}{ \log  r^2+\pi}.$
\end{cor}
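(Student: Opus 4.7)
The plan is to bracket the Diederich-Forn{\ae}ss index of $\Omega_r$ between the upper bound of Theorem \ref{main} and the lower bound of Theorem \ref{tctokh}, applied to the boundary annulus $M_r$ with constants $A=1$, $B=r^2$, and $C=\tfrac12$. With these values both theorems yield the common number $\frac{\pi}{2\cdot(1/2)\log r^2+\pi}=\frac{\pi}{\log r^2+\pi}$, so once the hypotheses are verified the two theorems pinch the index to exactly this value.

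The essential calculation is to establish that $\left|\tfrac{\partial^2\tilde\delta}{\partial w\,\partial\bar z}\right|\equiv\tfrac{1}{2|z|}$ on $M_r$. Using the standard defining function for the worm (see Definition \ref{worm}), the outward unit normal at $(z,0)\in M_r$ lies in the $w$-hyperplane but is rotated by an angle proportional to $\log|z|^2$; after normalization this yields $\tfrac{\partial\tilde\delta}{\partial z}\equiv 0$ on $M_r$ together with $\tfrac{\partial\tilde\delta}{\partial w}(z,0)=\tfrac12 e^{-i\log|z|^2}$ (up to the sign convention fixed by $\tilde\delta$). Because $(z,0)\in M_r$ for every $z$ with $1\le|z|\le r$, I can differentiate this identity directly in $\bar z$, using $\partial_{\bar z}\log|z|^2=1/\bar z$, to obtain $\tfrac{\partial^2\tilde\delta}{\partial w\,\partial\bar z}(z,0)=-\tfrac{i}{2\bar z}e^{-i\log|z|^2}$, whose modulus is $\tfrac{1}{2|z|}$.

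To invoke Theorem \ref{tctokh} I also need to check that $\Omega_r$ has regular weakly pseudoconvex points. The set of weakly pseudoconvex points of the worm is precisely $M_r$; the identity $\tfrac{\partial\tilde\delta}{\partial z}=0$ on $M_r$ forces $M_1=M_r$ and $M_2=\emptyset$, so $M_1\cap\bar M_2=\emptyset$ trivially, and $M_1$ is a single annulus-like component because $(0,0)\notin M_r$. Since $M_r$ is Levi-flat, $\tfrac{\partial^2\tilde\delta}{\partial z\,\partial\bar z}$ also vanishes on $M_r$, so the annulus $K$ appearing in the statement of Theorem \ref{tctokh} is again $M_r$ itself. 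The theorem then delivers the lower bound $\tau\ge\tfrac{\pi}{\log r^2+\pi}$.

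For the matching upper bound, Theorem \ref{main} requires the \emph{strict} inequality $\left|\tfrac{\partial^2\tilde\delta}{\partial w\,\partial\bar z}\right|>\tfrac{C}{|z|}$, so I cannot take $C=\tfrac12$ directly. Instead I apply Theorem \ref{main} for every $C'<\tfrac12$, obtaining $\tau\le\tfrac{\pi}{2C'\log r^2+\pi}$, and then let $C'\nearrow\tfrac12$ to conclude $\tau\le\tfrac{\pi}{\log r^2+\pi}$. Combining the two inequalities gives equality, as claimed. The main technical obstacle is the computation in the second paragraph: carefully extracting the exact value of $\tfrac{\partial\tilde\delta}{\partial w}$ along $M_r$ from the defining function, which hinges on correctly accounting for the twist built into the worm near its Levi-flat annulus.
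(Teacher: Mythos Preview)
Your proposal is correct and follows the same overall strategy as the paper: compute $\bigl|\tfrac{\partial^2\tilde\delta}{\partial w\,\partial\bar z}\bigr|=\tfrac{1}{2|z|}$ on $M_r$, apply Theorem~\ref{main} with $C'<\tfrac12$ and let $C'\nearrow\tfrac12$ for the upper bound, and apply Theorem~\ref{tctokh} with $C=\tfrac12$ for the lower bound.

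The one noteworthy difference is in how you compute the key curvature term. The paper works with the explicit defining function $\rho_r$, decomposes $\tfrac{\partial}{\partial w}$ into its normal and tangential parts $\tfrac12 e^{-i\log|z|^2}(\partial_\nu-i\partial_3)$, uses \eqref{eq:gradd} to kill $\partial_\nu\tfrac{\partial\tilde\delta}{\partial\bar z}$, and then relates the tangential piece back to derivatives of $\rho_r$. Your route is shorter: since $\nabla\tilde\delta$ is the unit normal and $|\nabla\rho_r|=2$ on $M_r$, you read off $\tfrac{\partial\tilde\delta}{\partial w}(z,0)=\tfrac12 e^{-i\log|z|^2}$ directly and then differentiate this identity tangentially in $\bar z$ along the annulus. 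Both are valid; yours exploits the fact that the annulus sits in a complex line in $z$, so $\partial_{\bar z}$ is tangential to $M_r$ and the differentiation is legitimate. The paper's longer computation has the virtue of illustrating a template that would also work when the weak set is not so conveniently aligned with a coordinate hyperplane.
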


\begin{rem}
\label{rem:worm_domains}
  Here and elsewhere in this paper we use Diederich and Forn{\ae}ss's original definition of the worm domain.  Many recent papers, including \cite{Liu17a}, choose a parametrization such that the annulus in the boundary is given by
  \[
   M_\beta=\set{(z,w):-\beta+\frac{\pi}{2}\leq\log|z|^2\leq\beta-\frac{\pi}{2},w=0}
  \]
  for $\beta>\frac{\pi}{2}$.  These definitions are equivalent after a re-scaling under the relationship $r=\exp(\beta-\frac{\pi}{2})$.  With this parametrization, the Diederich-Forn{\ae}ss Index for the worm domain is equal to $\frac{\pi}{2\beta}$, which is the value computed in \cite{Liu17a}.
\end{rem}

As an application of these theorems, we use the Diederich-Forn{\ae}ss Index to study a sufficient condition for Condition $R$.  In \cite{BoSt91}, Boas and Straube introduced a sufficient condition for Condition $R$ which we will refer to as the good vector field condition.  We will use the refined version of this condition presented in \cite{BoSt99} (the further refinement presented in \cite{Str10} is not needed on the domains which we are considering).  We will define this condition precisely later, but for now we note that we can use our methods to prove the following equivalence:
\begin{thm}\label{thm:good_vector_field}
  Let $\Omega$ be a Hartogs domain with smooth boundary and regular weakly pseudoconvex points such that any annulus-like connected component of the weakly pseudoconvex points is an annulus.  Then the Diederich-Forn{\ae}ss Index is equal to one if and only if there exists a family of good vector fields on $b\Omega$.
\end{thm}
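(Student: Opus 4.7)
The plan is to combine Theorems~\ref{main} and \ref{tctokh} with the identification at \eqref{alfi} of $\partial^2\tilde\delta/\partial w\partial\bar z$ with the coefficients of D'Angelo's one-form $\alpha$, whose approximate exactness on the weakly pseudoconvex set is exactly the good vector field condition of Boas and Straube. The argument splits in two stages: first I will characterize $\mathrm{DF}(\Omega) = 1$ by the pointwise vanishing of $\partial^2\tilde\delta/\partial w\partial\bar z$ on each annulus-like component of $W$; then I will translate that vanishing into exactness of $\alpha$ on $W$ and hence into the existence of a family of good vector fields.

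For the first stage, Hartogs symmetry $(z,w)\mapsto(e^{i\theta}z,w)$ forces $\tilde\delta$ to depend only on $|z|^2$ and $w$, so on an annulus-like component $K=\{A\leq|z|^2\leq B,\ w=w_0\}$ the modulus $|\partial^2\tilde\delta/\partial w\partial\bar z|$ is a function of $|z|^2$ alone. If this function is nonzero at some $r_0^2\in[A,B]$, continuity produces a sub-interval $[A',B']\ni r_0^2$ and a constant $C>0$ with $|\partial^2\tilde\delta/\partial w\partial\bar z|>C/|z|$ on the corresponding sub-annulus (which is still contained in $b\Omega$); Theorem~\ref{main} then yields $\mathrm{DF}(\Omega)\leq\pi/(2C\log(B'/A')+\pi)<1$. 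Conversely, if $\partial^2\tilde\delta/\partial w\partial\bar z\equiv 0$ on every annulus-like component, the hypothesis of Theorem~\ref{tctokh} is satisfied with arbitrarily small $C>0$, and letting $C\to 0^+$ gives $\mathrm{DF}(\Omega)\geq 1$.

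For the second stage, by hypothesis each annulus-like component $K$ is a genuine annulus, so $H^1(K)\cong\mathbb{R}$. Hartogs symmetry forces $\alpha|_K$ to be rotationally invariant, so in polar coordinates $z=re^{i\theta}$ it has the form $\alpha=a(r)\,dr+b(r)\,d\theta$, which is exact iff $b\equiv 0$; the identification \eqref{alfi} expresses $b(r)$ as a nonvanishing multiple of $\partial^2\tilde\delta/\partial w\partial\bar z$ evaluated at $(r,w_0)$, so $\alpha$ is exact on $K$ iff the derivative vanishes on $K$. Disk-like components are simply connected, so $\alpha$ is automatically exact on them and contributes no obstruction. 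The regularity hypothesis $M_1\cap\bar M_2=\emptyset$ keeps the components of $M_1$ separated from the strictly pseudoconvex set, so a primitive of $\alpha$ obtained on each component can be extended by a cutoff argument into a neighborhood of $W$ in $b\Omega$. Applying the characterization from \cite{BoSt99} that a family of good vector fields exists iff $\alpha$ is approximately $d$-exact on $W$ (in the supremum norm) closes the equivalence; averaging over the Hartogs action, approximate exactness on a symmetric annulus forces $b\equiv 0$, which matches up with the statement from the first stage.

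The main obstacle will be the precise translation between the refined analytic formulation in \cite{BoSt99}, which asks for $(1,0)$ vector fields $T_\epsilon$ with small $\arg(T_\epsilon\rho)$ and a small commutator twist, and the cohomological statement that $\alpha$ is approximately $d$-exact on $W$. Once a primitive $h$ of $\alpha$ is produced on each annulus-like component, one must assemble the complex tangential corrections into a globally smooth $(1,0)$ vector field on a full neighborhood of $W$ in $b\Omega$, patching across disk-like components and out into $M_2$; the hypotheses that annulus-like components are genuine annuli and that $M_1\cap\bar M_2=\emptyset$ are both essential here, since the first makes the cohomology class of $\alpha|_K$ a single scalar period and the second prevents accumulation of weakly pseudoconvex behavior that would defeat the cutoff construction.
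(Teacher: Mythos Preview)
Your approach is correct but differs from the paper's in two substantive ways.

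For the implication ``good vector fields $\Rightarrow$ DF index $=1$,'' the paper simply invokes Theorem~2.11 of \cite{Har18}, which establishes this on general domains. You instead close the loop internally: good vector fields $\Rightarrow$ $\alpha$ approximately exact on $W$ $\Rightarrow$ (averaging over the circle action) the $d\theta$-coefficient of $\alpha$ vanishes identically on each annulus $\Rightarrow$ $\partial^2\tilde\delta/\partial w\partial\bar z\equiv 0$ there $\Rightarrow$ Theorem~\ref{tctokh} with $C\to 0^+$ gives DF index $\geq 1$. This is more self-contained, at the cost of the extra averaging step.

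For ``DF index $=1$ $\Rightarrow$ good vector fields'' on an annulus $K$, the paper applies the contrapositive of Theorem~\ref{main} only to locate a \emph{single circle} in $K$ on which $\partial^2\tilde\delta/\partial w\partial\bar z=0$; this already kills the cohomology class of $\alpha|_K$, after which Remark~5 in \S4 of \cite{BoSt93} delivers the vector fields. Your sub-annulus argument proves the stronger (and true) statement that the derivative vanishes \emph{everywhere} on $K$, so that $\alpha|_K$ is exact outright. This is more than needed but a nice byproduct.

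One place your sketch is thinner than the paper: the components of $W$ lying in $M_2$ (weakly pseudoconvex points with $\partial\tilde\delta/\partial z\neq 0$). The paper handles these via Lemma~\ref{fono}, which produces a defining function plurisubharmonic on $b\Omega$ near $M_2$, and then invokes \cite{BoSt91}. Your ``patching out into $M_2$'' should invoke this explicitly, since the formula \eqref{alfi} for $\alpha$ is adapted to the case $\partial\tilde\delta/\partial z=0$ and does not directly cover $M_2$.
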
	

As in several recent papers (e.g., \cite{Har18} and \cite{Yum18}), we will also consider the natural analogue of the Diederich-Forn{\ae}ss Index on the complement of $\Omega$.  Here, the case is complicated by the fact that not every pseudoconvex domain $\Omega$ admits a Stein neighborhood basis.  Recall that $\Omega$ admits a Stein neighborhood basis if for every open set $U$ containing $\overline\Omega$ there exists a pseudoconvex domain $\Omega_U$ such that $\overline\Omega\subset\Omega_U\subset U$.  For sufficiently large winding number $r$, Diederich and Forn{\ae}ss have already shown in \cite{DiFo77a} that the worm domain is a smooth, bounded, pseudoconvex domain that does not admit a Stein neighborhood basis.  Fortunately, the existence of a Stein neighborhood basis on the domains that we are studying is completely characterized by work of Bedford and Forn{\ae}ss \cite{BeFo78}.  In particular, we have
\begin{thm}\label{ostnb}
  Let $\Omega\subset\mathbb{C}^2$ be a Hartogs domain with smooth boundary, and suppose that for some $w\in\mathbb{C}$, $B>A>0$ and $C>0$ the set of weakly pseudoconvex points is equal to the annulus $M =\{(z,w):\ A\leq\abs{z}^2\leq B\}$ in $b\Omega$ and $\abs{\frac{ \bd^2\tilde{\delta}}{\bd z\bd\bar{w}}}<\frac{ \pi}{2\sqrt{ A}\abs{ log\frac{ A}{B}}}$  when $\abs{ z}=\sqrt{A}$.  Then a Stein neighborhood basis for $\bar{ \Omega} $ exists. If $\abs{\frac{\bd^2\til{\delta}}{\bd z\bd\bar{w}}}>\frac{\pi}{2\sqrt{A}\abs{\log\frac{A}{B}}}$ when $\abs{z}=\sqrt{A} $ then no Stein neighborhood basis exists.
\end{thm}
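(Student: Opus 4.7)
The plan is to invoke the Bedford-Forn{\ae}ss characterization \cite{BeFo78} of when a smooth pseudoconvex domain in $\mathbb{C}^2$ containing a complex annulus in its boundary admits a Stein neighborhood basis, and to translate their criterion into the quantitative form stated in the theorem.

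First I would exploit the Hartogs symmetry to write $\tilde\delta(z,w) = g(|z|^2, w)$ for a smooth function $g$ defined near $b\Omega$; differentiating gives $\partial\tilde\delta/\partial z = \bar z\, g_1(|z|^2, w)$, where $g_1$ denotes the derivative of $g$ in its first argument. Since the weakly pseudoconvex locus is exactly $M$, we have $g_1(t, w)\equiv 0$ for $A\leq t\leq B$. Consequently the outward unit normal at every point of $M$ lies in the $w$-plane, and after choosing a smooth branch we may write $2\,\partial\tilde\delta/\partial\bar w(z, w)= e^{i\alpha(|z|^2)}$ for a smooth real function $\alpha:[A,B]\to\mathbb{R}$. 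A direct computation then yields
\[
  \frac{\partial^2\tilde\delta}{\partial z\,\partial\bar w}(z, w) = \frac{i\bar z}{2}\,\alpha'(|z|^2)\,e^{i\alpha(|z|^2)},
\]
so $|\partial^2\tilde\delta/(\partial z\,\partial\bar w)|(z, w) = (|z|/2)|\alpha'(|z|^2)|$, and the hypothesis at $|z|=\sqrt A$ becomes the strict inequality $A|\alpha'(A)|\log(B/A)<\pi$ or its reverse.

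Next I would appeal to Bedford-Forn{\ae}ss in the present setting: since $M$ is a closed complex annulus in $b\Omega$ whose complement in $b\Omega$ is strictly pseudoconvex, $\overline\Omega$ admits a Stein neighborhood basis if and only if the total twist $|\alpha(B)-\alpha(A)|$ of the outward normal along $M$ is strictly less than $\pi$. Informally, the forward direction is obtained by gluing plurisubharmonic bumps around $M$ using a global trivialization of the normal phase (which is unobstructed precisely when the twist stays below $\pi$), while the opposite direction produces a Hartogs figure in a neighborhood of $M$ across which analytic continuation forces nonexistence of a Stein neighborhood basis.

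Finally, one must connect the local hypothesis at $|z|=\sqrt A$ to the global twist $\alpha(B)-\alpha(A)$. Using the Hartogs symmetry, the regularity of $\tilde\delta$ from Krantz-Parks \cite{KrPa81}, and the identity $g_1(t,w)\equiv 0$ for $t\in[A,B]$, a careful Taylor expansion of $\tilde\delta$ in $w$ at points of $M$ should show that $t\mapsto t\alpha'(t)$ is constant on $[A,B]$; equivalently, $\alpha(t) = c\log t + d$, so that $\alpha(B)-\alpha(A) = A\alpha'(A)\log(B/A)$. The Bedford-Forn{\ae}ss threshold then matches the hypothesis of the theorem exactly, and both directions follow. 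The main obstacle I anticipate is this rigidity step: without the identification $t\alpha'(t)\equiv\text{const}$, the pointwise hypothesis at $|z|=\sqrt A$ would be insufficient to decide the global Bedford-Forn{\ae}ss condition, and the sharp two-sided conclusion depends delicately on both the Hartogs symmetry and on $M$ being the \emph{entire} weakly pseudoconvex locus (which ensures that $g_1$ vanishes across the full annulus rather than only at isolated curves).
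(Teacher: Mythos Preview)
Your overall strategy—reduce to the Bedford--Forn{\ae}ss criterion and compute the relevant invariant from the phase $\alpha$ of the normal—is the same as the paper's. The essential difference is in how the criterion is formulated and hence what needs to be computed.

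The paper states Bedford--Forn{\ae}ss in its cohomological form: one computes the period $c_1=\int_{\gamma_1}\alpha$ of D'Angelo's one-form around the \emph{inner} boundary circle $\gamma_1=\{|z|=\sqrt{A}\}$, sets $a_1=\frac{c_1}{4\pi}\log\frac{A}{B}$, and the criterion is $|a_1|<\pi$. Since $c_1$ is an integral over $|z|=\sqrt{A}$, a direct substitution of $\rho_w=\tfrac12 e^{i\theta(t)}$ gives $c_1=-4\pi A\,\theta'(A)$ and hence $|a_1|=2\sqrt{A}\,\bigl|\tfrac{\partial^2\tilde\delta}{\partial z\partial\bar w}\bigr|\,\bigl|\log\tfrac{A}{B}\bigr|$ exactly. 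Both directions of the theorem then follow with no further work; in particular, no rigidity step is needed.

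Your route instead states the criterion as $|\alpha(B)-\alpha(A)|<\pi$ and then requires the rigidity $t\alpha'(t)\equiv\mathrm{const}$ to pass from the global twist to the value at $|z|=\sqrt{A}$. The rigidity is true, but your proposed justification (``a careful Taylor expansion of $\tilde\delta$ in $w$'') is not the right mechanism: the Taylor expansion in $w$ only produces the phase $\alpha(t)$, it does not constrain its $t$-dependence. What forces $t\alpha'(t)$ to be constant is that D'Angelo's one-form is \emph{closed} on $M$ (Lemma~5.14 in \cite{Str10}, invoked in the paper's Lemma~\ref{sono}); once that is in hand, the period $\int_{|z|=r}\alpha$ is independent of $r$, which unwinds to $(t\alpha'(t))'=0$. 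So your rigidity step has a fixable gap in its justification, and once fixed it is equivalent to—but strictly longer than—the paper's direct computation of $c_1$ at the inner circle.
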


As an interesting consequence, Theorems \ref{main} and \ref{ostnb} allow us to show the following:
\begin{cor}\label{cor:st_nbhd_existence}
  Let $\Omega$ be a Hartogs domain with smooth boundary such that the set of weakly pseudoconvex points is equal to an annulus, and suppose that the Diederich-Forn{\ae}ss Index for $\Omega$ is equal to one.  Then $\Omega$ admits a Stein neighborhood basis.
\end{cor}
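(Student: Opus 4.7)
The plan is to chain Theorems \ref{main} and \ref{ostnb} via a sub-annulus argument. Let $M = \{(z,w_0) : A \leq |z|^2 \leq B\}$ denote the set of weakly pseudoconvex points. By the Hartogs symmetry \eqref{eq:Hartogs}, $\tilde\delta(e^{i\theta}z,w) = \tilde\delta(z,w)$, and differentiating gives
\[
\frac{\partial^2\tilde\delta}{\partial w\partial\bar z}(e^{i\theta}z,w) = e^{i\theta}\frac{\partial^2\tilde\delta}{\partial w\partial\bar z}(z,w),
\]
so its modulus is rotationally invariant in $z$. Hence $F(r) := \left|\frac{\partial^2\tilde\delta}{\partial w\partial\bar z}(z,w_0)\right|$ (for $|z| = r$) is a well-defined continuous function on $[\sqrt A,\sqrt B]$.

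The key claim is that if the Diederich-Forn\ae{}ss Index equals one then $F(\sqrt A) = 0$. Arguing by contradiction, suppose $F(\sqrt A) = c > 0$. By continuity, pick $\epsilon > 0$ with $F(r) \geq c/2$ on $[\sqrt A,\sqrt A + \epsilon]$; setting $A_1 = (\sqrt A + \epsilon)^2$ and $C_1 = c\sqrt A/4$, one has $C_1/r \leq C_1/\sqrt A = c/4 < c/2 \leq F(r)$ on $[\sqrt A,\sqrt{A_1}]$, so the sub-annulus $M_1 = \{(z,w_0) : A \leq |z|^2 \leq A_1\} \subset M \subset b\Omega$ satisfies the hypothesis of Theorem \ref{main}. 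That theorem then forces the Diederich-Forn\ae{}ss Index of $\Omega$ to be at most $\pi/(2C_1\log(A_1/A)+\pi) < 1$, a contradiction.

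Hence $F(\sqrt A) = 0 < \pi/(2\sqrt A \log(B/A))$, which is exactly the hypothesis of the first part of Theorem \ref{ostnb}, providing the desired Stein neighborhood basis for $\overline\Omega$. The essential observation is that Theorem \ref{main} applies to any annulus in $b\Omega$, not merely the maximal one, so one can localize near the inner circle $|z|=\sqrt A$ to convert a pointwise non-vanishing of $|\partial^2\tilde\delta/\partial w\partial\bar z|$ there into an upper bound on the index strictly below one; the only technical subtlety is choosing $C_1$ strictly below the threshold $c\sqrt A/2$ so that Theorem \ref{main}'s strict inequality is preserved on the closed sub-annulus.
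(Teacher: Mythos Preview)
Your proof is correct and follows essentially the same approach as the paper's own argument: both apply Theorem \ref{main} to sub-annuli near the inner radius $|z|^2=A$ to force $\frac{\partial^2\tilde\delta}{\partial w\partial\bar z}=0$ there, and then invoke Theorem \ref{ostnb}. The paper phrases this as ``for every $\eps>0$ there is a circle in $M_\eps=\{A\leq|z|^2\leq A+\eps\}$ on which the derivative vanishes, hence by continuity it vanishes at $|z|^2=A$,'' whereas you argue the contrapositive directly by assuming $F(\sqrt A)>0$ and producing an explicit sub-annulus with a uniform lower bound---but these are logically the same maneuver, and your version is arguably slightly cleaner.
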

Given the connection between Condition $R$ and the Diederich-Forn{\ae}ss Index studied in \cite{Koh99}, \cite{Har11}, and \cite{PiZa14}, this result can be seen as a parallel result to Zeytuncu's Theorem 8 (and Remark 6) in \cite{Zey14}.  Zeytuncu shows that on the Hartogs domain $\Omega_g=\{(z,w)\in\mathbb{C}^2:|w|<1\text{ and }|z|<|g(w)|\}$, where $g$ is a bounded holomorphic function on the unit disk, Condition $R$ implies that $\overline\Omega_g$ admits a Stein neighborhood basis.  Note that $\Omega_g$ is necessarily not smooth, so Zeytuncu's result applies to a different class of Hartogs domains from those considered in Corollary \ref{cor:st_nbhd_existence}.

Once we know that a Stein neighborhood basis exists, we have the analogue of Theorem \ref{main} on the complement of $\Omega$:
\begin{thm}\label{tntn1}
  Let $\Omega\subset\mathbb{C}^2$ be a Hartogs domain with smooth boundary, and suppose that for some $w\in\mathbb{C}$, $B>A>0$ and $C>0$ the annulus $M =\{(z,w):\ A\leq\abs{z}^2\leq B\}$ is in $b\Omega$ and $\abs{\frac{\bd^2\til{\delta}}{\bd w\bd \bar{z}}}>\frac{C}{\abs{ z}}$ on $M$.
  Assume there exists a smooth strictly positive function $h$
  such that
  	\[ \sigma=h\til{\delta}^{\tau}\]  is plurisubharmonic on $U\backslash\bar{\Omega}$, for some $\tau>1$ and some neighborhood $U$ of $\overline\Omega$. Then  $C<\frac{\pi}{2\log \frac{B}{A}} $ and  $ \tau>\frac{ \pi}{\pi-2C\log\frac{ B}{A}}.$
\end{thm}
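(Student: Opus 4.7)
The plan is to adapt the proof of Theorem \ref{main} to the complement side of $\Omega$. Plurisubharmonicity of $\sigma = h\tilde\delta^\tau$ on $U \setminus \overline\Omega$ (where $\tilde\delta > 0$ and $\tau > 1$) yields, upon expanding $(\partial\bar\partial\sigma)(V, \bar V) \geq 0$ and dividing by $\tau h \tilde\delta^{\tau-2}$, a pointwise inequality
\begin{equation*}
(\tau-1)|V\tilde\delta|^2 + \tilde\delta\,(\partial\bar\partial\tilde\delta)(V,\bar V) + 2\tilde\delta\,\re\bigl[(V\tilde\delta)\,\overline{V\log h}\,\bigr] + O(\tilde\delta^2) \geq 0
\end{equation*}
for every $(1,0)$ vector field $V$. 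Relative to Theorem \ref{main}, the signs are reversed: here both $\tilde\delta > 0$ and $\tau - 1 > 0$, so the first term is positive, and a bound on $\tau$ must come from a choice of $V$ that forces the second-order Hessian term to be sufficiently negative near the annulus $M$.

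After averaging $h$ over the Hartogs rotation $z \mapsto e^{i\theta}z$ (so that both $\tilde\delta$ and $h$ depend only on $|z|^2$ and $w$), I would test against a $(1,0)$ vector field $V = \partial_z + \mu(z,w)\partial_w$ with the coefficient $\mu$ chosen so that its phase resonates with the phase of $\tilde\delta_{w\bar z}$. The Hartogs symmetry yields $\tilde\delta_{w\bar z}(re^{i\theta}, w) = e^{i\theta}\tilde\delta_{w\bar z}(r, w)$, so the phase of this curvature term winds by $2\pi$ around each Hartogs circle. Using the hypothesis $|\tilde\delta_{w\bar z}| > C/|z|$ and integrating the pointwise inequality around the Hartogs circle ($\theta \in [0, 2\pi]$) and across the annulus ($|z|^2 \in [A, B]$), I expect to extract a global inequality $(1 - 1/\tau)\pi \geq 2C\log(B/A)$, where the factor $\pi$ arises from the winding integral and the factor $\log(B/A)$ from the radial integral of $C/|z|$ against the appropriate measure. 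Rearranging gives $\tau \geq \pi/(\pi - 2C\log(B/A))$, and finiteness of $\tau$ forces $C < \pi/(2\log(B/A))$.

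The hardest part will be choosing the $\theta$-dependent test vector field $V$ that extracts the sharp constant $\pi$, and verifying that the contribution of the auxiliary function $h$ remains truly subordinate so that its terms do not interfere with the leading-order estimate. In addition, the Taylor expansion of $\tilde\delta$ near $M$ must be carried out in coordinates adapted simultaneously to the Hartogs symmetry and to the normal direction into $U \setminus \overline\Omega$, and the integration across the annulus must respect the geometry of the signed distance function on the complement.
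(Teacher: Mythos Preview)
Your starting point---expanding the plurisubharmonicity of $\sigma$, averaging $h$ over the Hartogs rotation, and noting the sign reversals relative to Theorem~\ref{main}---is correct and matches the paper. However, your proposed mechanism for extracting the sharp constant $\pi$ is misconceived, and the plan as written has a genuine gap.

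You attribute the $\pi$ to a ``winding integral'' in the $\theta$-variable. But once $h$ has been averaged, every quantity in the problem depends only on $t=|z|^2$ and $w$; there is no residual $\theta$-dependence left to integrate against. The relevant curvature quantity in the determinant condition is not the phase of $\tilde\delta_{w\bar z}$ but its modulus squared, namely $|\rho_{tw}|^2$, which is rotationally invariant. A $\theta$-dependent test vector field therefore buys nothing beyond the rotationally symmetric reduction. Moreover, without scaling the $\partial_w$-component of your test vector by $\tilde\delta$ (as the paper does with $(\xi,\eta)=(\xi,\tilde\delta\,\hat\eta)$), the leading term $(\tau-1)|V\tilde\delta|^2$ simply dominates and no constraint on $\tau$ emerges.

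The paper's route is as follows. After averaging, plurisubharmonicity near $M$ forces a $2\times 2$ Hermitian matrix (entries $a,b,c$ as in Theorem~\ref{main}, with signs adjusted for $\tau>1$ and $\tilde\delta>0$) to be positive semi-definite on $M$. The determinant condition $ab-|c|^2\geq 0$ is then \emph{linearized} by the substitution $\hat h = g^{1-\tau}$, yielding the scalar differential inequality
\[
-\tfrac{1}{4}\tau(1-\tau)^2\bigl(g_t + t\,g_{tt}\bigr) - \tau^3 t\,g\,|\rho_{tw}|^2 \geq 0
\]
on $A\leq t\leq B$. The hypothesis $|\rho_{tw}| > C/t$ and the change of variable $u = \tfrac{2\tau C}{\tau-1}\log t$ reduce this to $\tilde g'' + \tilde g < 0$ for a positive function $\tilde g$ on an interval of length $\tfrac{2\tau C}{\tau-1}\log(B/A)$. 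The constant $\pi$ now enters as a Sturm-comparison fact (the one Diederich and Forn{\ae}ss prove in \cite{DiFo77a}): no positive function satisfies $\tilde g'' + \tilde g < 0$ on an interval of length $\geq \pi$. Hence $\tfrac{2\tau C}{\tau-1}\log(B/A) < \pi$, which rearranges to both stated conclusions.

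The two ideas you are missing are the linearizing substitution $\hat h = g^{1-\tau}$ and the ODE/Sturm origin of $\pi$. Without the first, your concern that the $h$-terms might not remain ``subordinate'' is exactly right---they do not, and the substitution is what absorbs them. Without the second, your integration scheme has no visible route to the sharp constant: $\pi$ here is the half-period of the comparison equation $y''+y=0$, not a winding number.
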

Adopting terminology of \cite{Sah12}, we say that $\Omega$ admits a strong Stein neighborhood basis if the neighborhood basis is obtained by considering level curves of a plurisubharmonic function.  The hypotheses of Theorem \ref{tntn1} are even stronger, although they are known to be satisfied in many cases (see \cite{FoHe07}, \cite{FoHe08}, or \cite{Har18}).  There is not yet a standard terminology for this analogue of the Diederich-Forn{\ae}ss Index, although Yum \cite{Yum18} has suggested ``Steinness Index."

On the worm domain (see Definition \ref{worm}), Theorem \ref{tntn1} gives us:
\begin{cor}\label{simsim}
  Let $ \Omega_r$ be a worm domain with weakly pseudoconvex points given by the annulus $M_r=\{(z,w):1\leq\abs{z}\leq r, w=0\}$ for some $r>1$.  If $\abs{\log\frac{1}{r^2}}<\pi $, then a Stein neighborhood basis exists for $\Omega_r$ and if there exists $\tau>1$ and a smooth function $h>0$ such that $\rho=h\tilde{\delta}^{\tau}$ is plurisubharmonic on $U\backslash\overline\Omega$, then $\tau\geq \frac{\pi}{\pi-\log r^2}$.  If $\abs{\log\frac{1}{r^2}}>\pi$, then no Stein neighborhood basis exists.
\end{cor}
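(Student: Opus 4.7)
The plan is to derive all three conclusions directly from Theorems~\ref{ostnb} and \ref{tntn1} after pinning down the critical curvature quantity $\abs{\frac{\partial^2\tilde\delta}{\partial w\partial\bar z}}$ on the annulus $M_r$. I would first carry out (or extract from the proof of Corollary~\ref{cor:worm_domain}) the computation showing that $\abs{\frac{\partial^2\tilde\delta}{\partial w\partial\bar z}}=\frac{1}{2\abs{z}}$ on $M_r$. A quick consistency check on the value $C=\tfrac{1}{2}$: the Diederich--Forn{\ae}ss index recorded in Corollary~\ref{cor:worm_domain} is $\frac{\pi}{\log r^2+\pi}$, which matches the upper bound $\frac{\pi}{2C\log(B/A)+\pi}$ from Theorem~\ref{main} with $A=1$, $B=r^2$ exactly when $C=\tfrac{1}{2}$.

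With $C=\tfrac{1}{2}$, $A=1$, and $B=r^2$ in hand, Theorem~\ref{ostnb} yields the first and third assertions at once. Its threshold is $\frac{\pi}{2\sqrt{A}\abs{\log(A/B)}}=\frac{\pi}{2\log r^2}$, and since $\abs{\frac{\partial^2\tilde\delta}{\partial z\partial\bar w}}=\abs{\frac{\partial^2\tilde\delta}{\partial w\partial\bar z}}=\tfrac{1}{2}$ at $\abs{z}=1=\sqrt{A}$ (using that $\tilde\delta$ is real-valued), the existence condition $\tfrac{1}{2}<\frac{\pi}{2\log r^2}$ simplifies to $\log r^2<\pi$, which is exactly the hypothesis $\abs{\log(1/r^2)}<\pi$. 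The non-existence condition reverses the inequality to $\log r^2>\pi$, giving the third statement.

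For the lower bound on $\tau$, I would apply Theorem~\ref{tntn1} with any constant $C'<\tfrac{1}{2}$. Since $\abs{\frac{\partial^2\tilde\delta}{\partial w\partial\bar z}}=\frac{1}{2\abs{z}}>\frac{C'}{\abs{z}}$ on $M_r$, the hypothesis is met and the theorem yields $\tau>\frac{\pi}{\pi-2C'\log r^2}$. Letting $C'\nearrow\tfrac{1}{2}$ (valid because the hypothesis $\log r^2<\pi$ keeps the denominator positive in the limit) produces $\tau\geq\frac{\pi}{\pi-\log r^2}$. The one substantive obstacle is the explicit derivation of $\abs{\frac{\partial^2\tilde\delta}{\partial w\partial\bar z}}=\frac{1}{2\abs{z}}$ on $M_r$; this requires writing $\tilde\delta$ near the twisted annulus of the worm (leveraging the Krantz--Parks $C^2$ regularity) and differentiating, with the factor $\frac{1}{2\abs{z}}$ arising from the phase $e^{i\log\abs{z}^2}$ that defines the twist. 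Since the identical computation is needed to establish Corollary~\ref{cor:worm_domain}, I would invoke that computation rather than reprise it.
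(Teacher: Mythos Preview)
Your proposal is correct and follows essentially the same approach as the paper: both invoke the computation $\abs{\frac{\partial^2\tilde\delta}{\partial w\partial\bar z}}=\frac{1}{2\abs{z}}$ on $M_r$ from the proof of Corollary~\ref{cor:worm_domain}, then feed $A=1$, $B=r^2$ into Theorem~\ref{ostnb} for the (non)existence of a Stein neighborhood basis, and apply Theorem~\ref{tntn1} with every $C'<\tfrac{1}{2}$ and pass to the limit to obtain $\tau\geq\frac{\pi}{\pi-\log r^2}$.
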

This is consistent with recent discoveries due to Yum \cite{Yum18}, after the re\-par\-a\-met\-riz\-at\-ion discussed in Remark \ref{rem:worm_domains}.

The authors would like to thank the anonymous referee for many helpful comments and corrections.  This paper is adapted from the PhD Thesis of the first author at the University of Arkansas under the direction of the second author.

   \section{Preliminaries}

\subsection{Notation and Definitions}  	

  Recall that a pseudoconvex domain with $C^2$ boundary is a domain $\Omega$ such that for any $C^2$ defining function $r$ and $p\in b\Omega$, the Levi form
  	\[
  	\langle Lr(z)t,t \rangle=
  	\sum\limits_{i,j=1}^n \frac{\partial^2 r}{\partial z_j \partial \overline{z}_j}(p)t_i \overline{t}_j\]
  	is nonnegative for all $t=(t_1, \cdots, t_n) \in \mathbb{C}^n$ with $\sum_{j=1}^{n} t_j(\frac{\partial r}{\partial z_j})(p)=0$.  If the Levi form is strictly positive for all $t\neq 0$ at some $p\in b\Omega$, we say that $\Omega$ is strictly pseudoconvex at $p$.

Since our primary goal in this paper is to study bounded plurisubharmonic exhaustion functions for a domain, we recall that a function $\varphi : \Omega \rightarrow \mathbb{R}$ is an exhaustion function for a domain $\Omega$  if the closure of $\{x\in \Omega |\varphi(x)<c\}$ is compact for all $c$ in the range of $\varphi$.  A $C^2$ function $\varphi$ is said to be plurisubharmonic on $\Omega$ if and only if
  	\[\sum\limits_{j,k=1}^n \frac{\partial^2 \varphi}{\partial z_j \partial \overline{z}_k}(z)t_j \overline{t}_k
  	\geq 0\]
  	for all $t=(t_1, \cdots, t_n) \in \mathbb{C}^n$ and $z\in\Omega$.



We adopt the original definition of the worm domain given by Diederich and Forn{\ae}ss in \cite{DiFo77a}:
\begin{defn}\label{worm}
	Let $\lambda:\mathbb{R}\rightarrow\mathbb{R} $ be a smooth function satisfying the following properties:
	\begin{enumerate}
	\item $\lambda(x)=0	$ if $x\leq 0 $
	\item $\lambda(x)>1 $ if $ x>1$
	\item $\lambda''(x)\geq100\lambda'(x) $ for all $x $
	\item $\lambda''(x)>0 $ if $x>0 $
	\item $\lambda'(x)>100 $ if $\lambda(x)>\frac{1}{2} .$
	\end{enumerate}
	For any $r>1$, define the function $\rho_r:\mathbb{C}\times\mathbb{C}\rightarrow\mathbb{R}$ as follows:
	$$\rho_r(z,w)=\abs{ w+e^{( i\log z\bar{z})}}^2-1+\lambda\left(\frac{1}{\abs{z}^2}-1\right)+\lambda\left(\abs{z}^2-r^2\right). $$
	 Then $\Omega_r=\{(z,w)\in\mathbb{C}\times\mathbb{C}: \rho_r(z,w)<0\}$ is called a worm domain.
	 \end{defn}

  As shown in \cite{DiFo77a}, the worm domain is a smooth pseudoconvex domain.  Also in \cite{DiFo77a}, it is shown that for any exponent $0<\tau<1$, there exists an $r>1$ such that the Diederich-Forn{\ae}ss Index for the worm domain $\Omega_r$ is less than $\tau$.  Part of our goal in this paper is to adapt Diederich and Forn{\ae}ss's argument to general Hartogs domains admitting an annulus in the boundary.

  Following \cite{BoSt99}, we say that $\Omega$ admits a family of good vector fields if for every $\eps>0$, there exists a vector field $X_{\eps}$ of type $(1,0)$ such that the coefficients of $X_\eps$ are smooth in a neighborhood $U_\eps$ of the set of the boundary points of $\Omega$ of infinite type, and the following conditions are satisfied:
  \begin{enumerate}
  \item $\abs{\arg X_{\eps}\rho}<\eps$	on $U_\eps$,  $C^{-1}<\abs{X_\eps\rho}<C$, and
  \item $\bd\rho[X_\eps,\frac{\bd}{\bd\bar{z}_j}]<\eps$ on $U_\eps$ for all $1\leq j\leq n$.
  \end{enumerate}
  Boas and Straube have shown that if a smooth, bounded domain $\Omega$ admits a family of good vector fields, then $\Omega$ satisfies Condition $R$ (see \cite{BoSt91} for the original version, \cite{BoSt99} for the version used here, and \cite{Str10} for a further refinement that is not needed in our setting).

   \subsection{Distance Function}
  For a smooth bounded domain $\Omega\subset\mathbb{R}^n$ with $C^2$ boundary, there exists a neighborhood $U$ of $b\Omega$ on which the signed distance function $\tilde\delta$ is also $C^2$ \cite{KrPa81}.  For $x\in U$, let $\pi(x)$ denoted the unique point in $b\Omega$ minimizing the distance to $x$ (the uniqueness of $\pi(x)$ on $U$ is also found in \cite{KrPa81}).  On this set $U$, we have
  \begin{equation}
    \label{eq:gradd}
    \nabla\tilde\delta(x)=\nabla\tilde\delta(\pi(x)).
  \end{equation}
  This follows from Theorem 4.8 in \cite{Fed59}.

  The following result can be found in  \cite{Wei75}, \cite{HeMc12}, and \cite{HaRa13}:
  \begin{thm}
  	For any smooth bounded domain $\Omega\subseteq \R^n $, there exists a neighborhood U of $b\Omega$ such that for all $x\in U$
  	\begin{equation}\label{eq:second} \nabla^2\til{\delta}(x)=\nabla^2\til{\delta}(\pi(x))\cdot \left(I+\til{\delta}(x)\nabla^2\til{\delta}(\pi(x))\right)^{-1}, \end{equation}
  where $I$ denotes the identity matrix and $\nabla^2\tilde\delta$ denotes the (real) Hessian of $\tilde\delta$.
  	
  \end{thm}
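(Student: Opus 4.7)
The plan is to compute $\nabla^2\tilde\delta(x)$ by exploiting the parametrization of the tubular neighborhood of $b\Omega$ via the nearest-point projection $\pi$. On the neighborhood $U$ where $\pi$ is well-defined and $C^1$, the identity
\[
  x = \pi(x) + \tilde\delta(x)\nabla\tilde\delta(\pi(x))
\]
holds, because $\nabla\tilde\delta(\pi(x))$ is the outward unit normal at $\pi(x)$ and $\tilde\delta(x)$ measures the signed normal distance. Combined with \eqref{eq:gradd}, this is the essential structural identity to differentiate.

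First I would differentiate the identity above in $x$, yielding
\[
  I = J_\pi(x) + \nabla\tilde\delta(x)(\nabla\tilde\delta(x))^T + \tilde\delta(x)\, J_\pi(x)\, H(\pi(x)),
\]
where $J_\pi(x)$ is the Jacobian of $\pi$ and $H(p)=\nabla^2\tilde\delta(p)$. Setting $\nu(p)=\nabla\tilde\delta(p)$, this rearranges to
\[
  J_\pi(x)\bigl(I+\tilde\delta(x) H(\pi(x))\bigr) = I - \nu(\pi(x))\nu(\pi(x))^T.
\]
Next, differentiating \eqref{eq:gradd}, that is $\partial_j\tilde\delta(x) = \nu_j(\pi(x))$, gives
\[
  \nabla^2\tilde\delta(x) = J_\pi(x)\, H(\pi(x)).
\]
At this stage the problem reduces to showing that right-multiplying the relation for $J_\pi(I+\tilde\delta H)$ by $H(I+\tilde\delta H)^{-1}$ and then by $H$ collapses the $\nu\nu^T$ term.

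The key algebraic fact is that $H(\pi(x))\nu(\pi(x))=0$, which follows from differentiating $|\nu|^2\equiv 1$ (valid on $U$ by Krantz--Parks) and using the symmetry of $H$; equivalently, $\nu$ lies in the kernel of $H$ and, by symmetry, perpendicular to its range. For $x$ sufficiently close to $b\Omega$, $I+\tilde\delta(x)H(\pi(x))$ is invertible (eigenvalues bounded away from $0$), commutes with $H(\pi(x))$, and preserves $\nu$; hence $\nu^T(I+\tilde\delta H)^{-1}H = \nu^T H = 0$. Combining these observations,
\[
  \nabla^2\tilde\delta(x) = J_\pi(x) H(\pi(x)) = (I-\nu\nu^T)(I+\tilde\delta(x) H(\pi(x)))^{-1} H(\pi(x)) = H(\pi(x))(I+\tilde\delta(x) H(\pi(x)))^{-1},
\]
which is the desired formula.

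The main obstacle is keeping the projection $I-\nu\nu^T$ in balance: one must verify both that $\nu$ lies in $\ker H$ (to drop the projection on the right) and that $H$ and $(I+\tilde\delta H)^{-1}$ commute (so the two possible orderings agree with the stated one). Both are consequences of $|\nu|=1$ on $U$ plus the symmetry of $H$, but they are the only nontrivial inputs beyond the elementary differentiation of $x=\pi(x)+\tilde\delta(x)\nu(\pi(x))$. Shrinking $U$ as needed to ensure invertibility of $I+\tilde\delta H$ completes the argument.
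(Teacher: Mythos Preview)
Your argument is correct. The paper does not actually supply a proof of this theorem; it merely cites \cite{Wei75}, \cite{HeMc12}, and \cite{HaRa13} for the result and then proceeds to use it. Your derivation---differentiating the tubular-neighborhood identity $x=\pi(x)+\tilde\delta(x)\nabla\tilde\delta(\pi(x))$ together with \eqref{eq:gradd}, and then eliminating the projection $I-\nu\nu^T$ via $H\nu=0$ and the commutativity of $H$ with $(I+\tilde\delta H)^{-1}$---is precisely the standard computation one finds in those references (see in particular \cite{HaRa13}). There is nothing to compare; you have filled in what the paper deliberately omits.
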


  With more calculations we can rewrite this result in complex coordinates.  For $1\leq j,k\leq n$, if we write $z_j=x_j+iy_j$, then we have
  \begin{equation} \label{eq:formula}
  \frac{ \bd^2 \til{\delta}}{\bd z_j\bd \bar{ z}_k}=\frac{ 1}{4}\frac{ \bd^2\til{\delta}}{\bd x_j\bd x_k}+\frac{ 1}{4}\frac{ \bd^2\til{\delta}}{\bd y_j\bd y_k}+\frac{ i}{4}\frac{ \bd^2\til{\delta}}{\bd x_j\bd y_k}-\frac{ i}{4}\frac{ \bd^2\til{\delta}}{\bd y_j\bd x_k}.\end{equation}

  When $\delta(z)$ is close to zero, the left-hand side of \eqref{eq:second} can be approximated by
  \begin{multline*}\nabla^2\til{\delta}(z)=
  \nabla^2\til{\delta}(\p(z))\left(I-\til{\delta}(z)\nabla^2\til{\delta}(\p(z))\right)+O((\delta(z))^2)\\
  =\nabla^2\til{\delta}(\p(z))-\til{\delta}(z)\left(\nabla^2\tilde{ \delta}(\p(z))\right)^2+O((\delta(z))^2),\end{multline*}
  where $\left(\nabla^2\tilde{ \delta}(\p(z))\right)^2=\left(\nabla^2\tilde{ \delta}(\p(z))\right)\left(\nabla^2\tilde{ \delta}(\p(z))\right)$ denotes matrix multiplication.

  Substituting \eqref{eq:formula}, we obtain
  \begin{multline*}
  \frac{ \bd^2\til{\delta}(z)}{\bd z_j\bd\bar{z}_k}= \frac{ \bd^2\til{\delta}(\p (z))}{\bd z_j\bd\bar{z}_k}\\
  -\til{\delta}(z)\sum_{\ell=1}^{n}
  \left(\frac{ \bd^2\til{\delta}(\p (z))}{\bd z_j\bd x_\ell} \cdot \frac{ \bd^2\til{\delta}(\p (z))}{\bd x_\ell \bd \bar{z}_k} +   \frac{ \bd^2\til{\delta}(\p (z))}{\bd z_j\bd y_\ell} \cdot \frac{ \bd^2\til{\delta}(\p (z))}{\bd y_\ell \bd \bar{z}_k} \right) +O((\delta(z))^2).
  \end{multline*}
  Using the identities $\frac{ \bd}{\bd x_j}=\frac{ \bd}{\bd z_j}+\frac{ \bd}{\bd\bar{z}_j}$ and $\frac{ \bd}{\bd y_j}=i\left(\frac{ \bd}{\bd z_j}-\frac{ \bd}{\bd\bar{z}_j}\right)$, we obtain
  \begin{multline}
  \label{dist}
  \frac{\bd^2\tilde{ \delta}(z)}{\bd z_j \bd \overline{ z}_k}=\frac{ \bd^2 \til{\delta}(\pi(z))}{\bd z_j \bd\overline{ z}_k}-\til{\delta}(z)\sum_{\ell=1}^{n}\Big(2\frac{ \bd^2 \til{\delta}(\pi(z))}{\bd z_j \bd\overline{ z}_\ell}\cdot\frac{ \bd^2 \til{\delta}(\pi(z))}{\bd z_\ell \bd\overline{ z}_k}
  \\
  +2\frac{\bd^2 \til{\delta}(\pi(z))}{\bd z_j \bd{ z}_\ell}\cdot\frac{ \bd^2 \til{\delta}(\pi(z))}{\bd \overline{z}_\ell \bd\overline{ z}_k}\Big)+O((\delta(z))^2).
\end{multline}

   \section{Proof of Main Result }
  \subsection{Upper bound for Diederich-Forn{\ae}ss Index}
  	 \begin{proof}[Proof of Theorem \ref{main}]
  	 	
Since $\Omega$ is  Hartogs, we may assume
 \[
 \til{ \delta}(z,w)=\rho(\abs{z}^2,w),\]
 for some function $\rho$ that is smooth for $(z,w)$ near the boundary of $\Omega$.  To emphasize that this is actually a function on $\mathbb{R}\times\mathbb{C}$, we introduce the notation $t=|z|^2$.  We will abbreviate $\rho_w=\frac{\partial\rho}{\partial w}$ and $\rho_t=\frac{\partial\rho}{\partial t}$.  To simplify notation, we will frequently identify the points $(|z|^2,w)$ and $(z,w)$.

For any $p\in\Omega$ satisfying $\pi(p)\in M$, the signed distance function  $\til{\delta}$ satisfies
\begin{equation}
\label{eq:z_derivative_vanishes}
\frac{ \bd\tilde{ \delta}}{\bd z}(\pi(p))=0,
\end{equation}
and
\begin{equation}
\label{eq:z_bar_z_derivative_vanishes}
\frac{ \bd^2\tilde{ \delta}}{\bd z\bd \overline{ z}}(\pi(p))=0.
\end{equation}
 From \eqref{eq:gradd},
 \begin{equation}
 \label{eq:z_derivative_vanishes_interior}
 \frac{ \bd\til{\delta}}{\bd z}(p)=0
 \end{equation}
 as well.  Hence,
 \begin{equation}\label{eq:deltaw}
   \|\rho_w(p)\|^2=\frac{1}{4}\|\nabla\tilde\delta(p)\|^2=\frac{1}{4}.
 \end{equation}
   If we take the derivative of both sides of \eqref{eq:deltaw} with respect to $t$, we get
   \begin{equation}\label{eq:phi}
   \rho_{t\bar{w}}(p)\rho_w(p)+\rho_{tw}(p)\rho_{\bar{w}}(p)=0.
   \end{equation}
 Since
 \[\frac{ \bd\til{\delta}}{\bd z}(z,w)=\rho_t(\abs{ z}^2,w)\overline{ z},\]
 and the first component of $\pi(p)$ is non-zero on the annulus $M$, \eqref{eq:z_derivative_vanishes} gives us
 \[\rho_t(\pi(p))=0.\]
 Since
 \[\frac{ \bd^2\til{\delta}}{\bd z\bd\overline{ z}}(z,w)=\rho_{tt}(\abs{z}^2,w)\abs{ z}^2+\rho_t(\abs{ z}^2,w),\]
 \eqref{eq:z_bar_z_derivative_vanishes} gives us
 \begin{equation}
 \label{eq:tt_derivative_vanishes}
 \rho_{tt}(\pi(p))=0.
 \end{equation}
  We also compute
  \begin{equation}
  \label{eq:wz_derivative}
  \frac{ \bd^2\tilde{\delta}}{\bd w \bd z}(z,w)=\rho_{tw}(\abs{ z}^2,w)\overline{ z},
  \end{equation}
  and
  \begin{equation}
  \label{eq:bar_w_z_derivative}
  \frac{ \bd^2\tilde{ \delta}}{\bd \overline{ w} \bd z}(z,w)=\rho_{t\bar{w}}(\abs{ z}^2,w)\overline{ z}.
  \end{equation}

 Furthermore, since
 \[\frac{ \bd^2\tilde{ \delta}}{\bd z \bd z}(z,w)=\rho_{tt}(\abs{ z}^2,w)\overline{ z}^2,\]
 then \eqref{eq:tt_derivative_vanishes} gives us
 \begin{equation}
 \label{eq:zz_derivative_vanishes}
 \frac{ \bd^2\tilde{ \delta}}{\bd z \bd z}(\pi(p))=0.
 \end{equation}
 Substituting \eqref{eq:z_bar_z_derivative_vanishes} and \eqref{eq:zz_derivative_vanishes} in \eqref{dist}, we obtain
 \begin{multline*}
   \frac{ \bd^2\til{\delta}}{\bd z\bd\overline{ z}}(p)=-2\til{\delta}(p)\Big(\frac{ \bd^2\tilde{ \delta}}{\bd z\bd \overline{ w}}(\pi(p))\cdot\frac{ \bd^2\til{\delta}}{\bd w\bd\overline{ z}}(\pi(p))\\+\frac{ \bd^2\til{\delta}}{\bd z\bd w}(\pi(p))\cdot\frac{ \bd^2 \til{\delta}}{\bd\overline{ w}\bd\overline{z}}(\pi(p))\Big)+O((-\til{\delta}(p))^2).
 \end{multline*}
 Now, \eqref{eq:bar_w_z_derivative} and \eqref{eq:wz_derivative} give us
 \begin{equation}\label{eq:tin}
 \frac{ \bd^2\til{\delta}}{\bd z\bd\overline{ z}}(p)=
 -4\til{\delta}(p)\abs{\rho_{tw}(\pi(p))}^{2} \abs{p_z}^{2}+O((-\til{\delta}(p))^{2}).
 \end{equation}

Assume that $-h(-\tilde\delta)^\tau$ is plurisubharmonic on $U\cap\Omega$ for some positive $C^2$ function $h$ and some neighborhood $U$ of $b\Omega$.  Using the Hartogs symmetry of $\Omega$, we can set
\[
  \hat h(z,w)=\int_0^{2\pi}h(e^{i\theta}z,w)d\theta
\]
and obtain another plurisubharmonic function $-\hat h(-\tilde\delta)^\tau$ which is rotationally symmetric in $z$.  We set $\sigma=-\hat h(-\tilde\delta)^\tau$.

  We denote the complex Hessian of $\sigma $ at the point $(z,w)$ in the direction $(\xi,\eta)$ by 	
  	\begin{multline*} H_\sigma\left(\abs{z}^{2},w,\xi,\eta\right)=\\
  \frac{\partial^{2}\sigma}{\partial z \bd \overline{ z} }(z,w)\abs{\xi}^{2}+2 \re\left( \frac{\partial^2 \sigma}{\partial\overline{z}\partial w}(z,w)\eta\overline{\xi}\right)+\frac{\partial^2\sigma}{\partial w\partial \bar{w}}(z,w)\abs{\eta}^2 .\end{multline*}
At $p=(p_z,p_w)$, we use \eqref{eq:z_derivative_vanishes_interior} to compute
\[
\frac{\partial^{2}\sigma}{\bd z \bd \overline{ z} }(p)=\hat{h}(p)\tau\left((-\til{\delta}(p))^{(\tau-1)}\frac{ \bd^2\til{\delta}}{\bd z\bd\bar{z}}(p)\right)-\newline\frac{ \bd^2\hat{ h}}{\bd z\bd \bar z}(p)(-\til{\delta}(p))^\tau.\]
Substituting \eqref{eq:tin}, we get
		\[
		\frac{ \bd^2\sigma}{\bd z\bd\bar{ z}}(p)=\left(4\hat{h}(p)\tau\abs{\rho_{tw}(\pi(p))}^{2} \abs{p_z}^{2}-\newline\frac{ \bd^2\hat{ h}}{\bd z\bd \bar z}(p)\right)(-\til{\delta}(p))^\tau+O((-\til{\delta}(p))^{\tau+1}).\]
Also, \eqref{eq:z_derivative_vanishes_interior} gives us
\begin{multline*}
 \frac{\partial^2 \sigma}{\partial z\partial\bar{w}}(p)=\\
 \hat{ h}(p)\tau(-\til{\delta}(p))^{\tau-1}\frac{ \bd^2\til{\delta}}{\bd z\bd\bar{ w}}(p)- (-\til{\delta}(p))^\tau\frac{\bd^2\hat{h}}{\bd z\bd \bar{ w}}(p)+\tau(-\til{\delta}(p))^{(\tau-1)}\frac{\bd \hat{h}}{\bd z }(p)\frac{ \bd\til{\delta}}{\bd\bar{w}}(p).
 \end{multline*}
Since $\rho$ is $C^3$, $|\rho_{t\bar w}(\pi(p))-\rho_{t\bar w}(p)|<O(-\tilde\delta(p))$, so \eqref{eq:bar_w_z_derivative} implies
\begin{multline*}
 \frac{\partial^2 \sigma}{\partial z\partial\bar{w}}(p)=\\
 \hat{ h}(p)\tau(-\til{\delta}(p))^{\tau-1}\rho_{t\bar w}(\pi(p))\bar p_z+\tau(-\til{\delta}(p))^{(\tau-1)}\frac{\bd \hat{h}}{\bd z }(p)\frac{ \bd\til{\delta}}{\bd\bar{w}}(p)+O((-\til{\delta}(p))^{\tau}).
 \end{multline*}
 Furthermore,
 \begin{multline*}
 \frac{ \bd^2\sigma}{\bd w\bd\bar{w}}(p)=\\
 \hat{ h}(p)\tau\left((-\til{\delta}(p))^{(\tau-1)}\frac{ \bd^2 \til{\delta}}{\bd w\bd\bar{w}}(p)+(1-\tau)(-\til{\delta}(p))^{(\tau-2)}\frac{\bd\til{\delta}}{\bd\bar{w}}(p)\frac{ \bd\til\delta}{\bd w}(p)\right)\\
 +
 \tau(-\til{\delta}(p))^{(\tau-1)}\frac{ \bd\hat{h}}{\bd \bar{w}}(p)\frac{ \bd\til{\delta}}{\bd w}(p)\\
 -(-\til{\delta}(p))^\tau\frac{ \bd^2\hat{h}}{\bd w\bd \bar{w}}(p)+\tau(-\til{\delta}(p))^{(\tau-1)}\frac{ \bd\hat{h}}{\bd w}(p)\frac{ \bd\til{\delta}}{\bd\bar w}(p),
 \end{multline*}
 so \eqref{eq:deltaw} gives us
 \[
 \frac{\partial^2\sigma}{\partial w\partial \bar{w}}(p)=\frac{1}{4}\tau(1-\tau)(-\til{\delta}(p))^{(\tau-2)}\hat{h}(p)+O((-\til{\delta}(p))^{(\tau-1)}).
 \]

%

  %


  %
   %
   %
   %
   %
   %
   %
   %
   %
   %

  Motivated by the different orders of vanishing in each term, we evaluate the complex Hessian in the direction $(\xi,\eta)=(\xi,(-\tilde\delta(p))\hat\eta)$, and obtain
  \begin{multline*}
  H_\sigma(\abs{p_z}^2,p_w,\xi,\eta)=\left(4\hat{h}(p)\tau\abs{\rho_{tw}(\pi(p))}^{2} \abs{p_z}^{2}-\newline\frac{ \bd^2\hat{ h}}{\bd z\bd \bar z}(p)\right)(-\til{\delta}(p))^\tau\abs{\xi}^{2}\\
  +2\re\left(\left(\hat{ h}(p)\tau(-\til{\delta}(p))^{\tau}\rho_{t\bar w}(\pi(p))\bar p_z+\tau(-\til{\delta}(p))^{\tau}\frac{\bd \hat{h}}{\bd z }(p)\frac{ \bd\til{\delta}}{\bd\bar{w}}(p)\right)\xi\bar{\hat\eta}\right)\\
  +\frac{1}{4}\tau(1-\tau)(-\til{\delta}(p))^{\tau}\hat{h}(p)\abs{\hat\eta}^2+O((-\tilde\delta(p))^{\tau+1}|(\xi,\eta)|).
  \end{multline*}
Dividing by $(-\til{\delta}(p))^\tau$ and letting $p\rightarrow\pi(p)$,
  we see that if the complex Hessian is positive definite on $U\cap\Omega$ for some neighborhood $U$ of $b\Omega$, then the form
  \begin{multline*}
  \lim_{p\rightarrow\pi(p)}\frac{H_\sigma(\abs{p_z}^2,p_w,\xi,\eta)}{(-\tilde\delta(p))^\tau}=\\
  \left(4\hat{h}(\pi(p))\tau\abs{\rho_{tw}(\pi(p))}^{2} \abs{\pi(p)_z}^{2}-\newline\frac{ \bd^2\hat{ h}}{\bd z\bd \bar z}(\pi(p))\right)\abs{\xi}^{2}\\
  +2\re\left(\left(\hat{ h}(\pi(p))\tau\rho_{t\bar w}(\pi(p))\overline{\pi(p)}_z+\tau\frac{\bd \hat{h}}{\bd z }(\pi(p))\frac{ \bd\til{\delta}}{\bd\bar{w}}(\pi(p))\right)\xi\bar{\hat\eta}\right)\\
  +\frac{1}{4}\tau(1-\tau)\hat{h}(\pi(p))\abs{\hat\eta}^2
  \end{multline*}
  must be positive semi-definite on $M$.  On $M$, we set
  \[
    a(p)=4\hat{h}(p)\tau\abs{\rho_{tw}(p)}^{2} \abs{p_z}^{2}-\newline\frac{ \bd^2\hat{h}}{\bd z\bd \bar z}(p),
  \]
  \[
    b(p)=\frac{1}{4}\tau(1-\tau)\hat{h}(p),
  \]
  and
  \[
    c(p)=\hat{ h}(p)\tau\rho_{t\bar w}(p)\bar p_z+\tau\frac{\bd \hat{h}}{\bd z }(p)\frac{ \bd\til{\delta}}{\bd\bar{w}}(p),
  \]
  then this is equivalent to requiring that the matrix $\begin{pmatrix}a(p)&c(p)\\\bar c(p)&b(p)\end{pmatrix}$ be positive semi-definite.  Since $0<\tau<1$ and $\hat h(p)$ is positive, $b(p)>0$ on $M$, so it suffices to show that the determinant $a(p)b(p)-|c(p)|^2$ is also non-negative.

   We compute
  	\[\frac{\bd}{\bd z}\hat{h}(\abs{z}^2,w)=\frac{\bd\hat{h}}{\bd t}(\abs{ z}^2,w)\cdot \bar{z},\]
and
  	\[\frac{ \bd^2}{\bd z\bd \bar{z}}\hat{h}(\abs{z}^2,w)=\frac{\bd \hat{h}}{\bd t}(\abs{z}^2,w)+\frac{\bd^2\hat{h}}{\bd t^2}(\abs{ z}^2,w)\abs{z}^2.\]
  	If we write $p_t=|p_z|^2$, we can simplify $\abs{c(p)}^2$ via  	
  	\begin{multline*}\abs{c(p)}^{2}=\abs{\tau\bar p_z\left(\hat{h}(p)\rho_{t\bar{w}}(p)+\frac{\partial\hat{h}}{\partial t}(p)\rho_{\bar{w}}(p)\right)}^2\\
  	=\tau^{2} p_t\Bigg(\abs{\hat{h}(p)}^{2}\abs{\rho_{t\bar{w}}(p)}^{2}+\frac{\partial\hat{h}}{\partial t}(p)\left( \rho_{t\bar{w}}(p)\rho_{w}(p)+\rho_{tw}(p)\rho_{\bar w}(p)\right)+\\
  \left(\frac{\partial\hat{h}}{\partial t}(p)\right)^{2}\abs{\rho_{w}(p)}^{2}\Bigg).\end{multline*}
  	From \eqref{eq:phi} and \eqref{eq:deltaw}, this can be simplified to
  	\[
  		\abs{c(p)}^2=\tau^{2}p_t\left(\abs{\hat{h}(p)}^{2}\abs{\rho_{t\bar{w}}(p)}^{2}+\frac{1}{4}\left(\frac{\partial\hat{h}}{\partial t}(p)\right)^{2}\right).\]
Computing $a(p)b(p)$, we have
  		\begin{multline*}
  a(p)b(p)=\frac{\hat{h}(p)\tau(1-\tau)}{4} \left(4\hat{h}(p)\tau\abs{\rho_{tw}(p)}^{2}p_t-\frac{\partial\hat{h}}{\partial t}(p)-\frac{\partial^{2}\hat{h}}{\partial t^{2}}(p)p_t\right)\\
  		=\hat{h}^{2}(p)\tau^{2}(1-\tau)\abs{\rho_{tw}(p)}^{2} p_t-\tau(1-\tau)\left(\frac{ \bd \hat{h}}{\bd t}(p)\frac{\hat{h}(p) }{4} +\frac{\hat h(p)}{4}
\frac{ \bd ^{2}\hat{h}}{\bd t^{2}}(p) p_t \right).\end{multline*}
Combining these computations, we obtain
 \begin{multline*}
 a(p)b(p)-\abs{ c(p)}^2=
 -(\hat{h}(p))^{2}\tau^{3}\abs{\rho_{tw}(p)}^{2} p_t\\
 -\tau(1-\tau)\left(\frac{ \bd \hat{h}}{\bd t}(p)\frac{\hat{h}(p) }{4} +\frac{\hat h(p)}{4}
\frac{ \bd ^{2}\hat{h}}{\bd t^{2}}(p)p_t \right)-\tau^{2}p_t\frac{1}{4}\left(\frac{\partial\hat{h}}{\partial t}(p)\right)^{2}.
\end{multline*}
To linearize this expression, we set $g=(\hat h)^{1/(1-\tau)}$.  Since $\hat h>0$, we may assume that $g$ is also real and positive.
Substituting $\hat{h}=g^{(1-\tau)}$, we get
\begin{multline*}
 a(p)b(p)-\abs{c(p)}^2=\\
 \left(-\tau^3 p_t g(p)\abs{\rho_{tw}(p)}^2-\frac{ 1}{4}\tau(1-\tau)^2\left(\frac{ \bd g}{\bd t}(p)+p_t\frac{ \bd^2 g}{\bd t^2}(p)\right)\right)(g(p))^{1-2\tau}.
\end{multline*}
Hence, $a(p)b(p)-\abs{c(p)}^2\geq 0$ implies
\begin{equation} \label{eq:tau1}
  -\frac{ 1}{4}\tau(1-\tau)^2 \left(\frac{ \bd g}{\bd t}(p)+p_t\frac{ \bd^2 g}{\bd t^2}(p)\right)-\tau^3 p_t g(p) \abs{ \rho_{tw}(p)}^2\geq 0.
\end{equation}

Our hypothesis on $\abs{\frac{\bd^2\tilde{\delta}}{\bd w\bd\bar{z}}}$ and \eqref{eq:bar_w_z_derivative} imply
\[\abs{\rho_{tw}(p) }>\frac{C}{p_t}\]
on $M$.  Coupled with \eqref{eq:tau1}, this implies that
\begin{equation}\label{eq:tntn1}
 -\frac{ 1}{4}\tau(1-\tau)^2 \left(\frac{ \bd g}{\bd t}+t\frac{ \bd^2 g}{\bd t^2}\right)-\frac{C^2}{t^2}\tau^3 tg> 0
 \end{equation}
 when $A\leq t\leq B$.  Note that we may suppress the dependency on $p$ since $p_w$ and $\arg p_z$ no longer play a role in this inequality.

We will show that this implies a contradiction unless $\tau<\frac{ \pi}{2 C \log \frac{ B}{A}+\pi}$. Let $\tilde{ g}(s,w)=g(e^s,w)$, i.e., we will use the substitution $t=e^s$ so that $\log A\leq s\leq\log B $ on $M$.  Then
\[\frac{\partial\tilde{ g}}{\partial s}(s,w)=e^s\frac{\partial g}{\partial t}(e^s,w)\]
and
\[\frac{\partial^2\tilde{ g}}{\partial s^2}(s,w)=e^s \frac{\partial g}{\partial t}(e^s,w)+e^{2s}\frac{\partial^2 g}{\partial t^2}(e^s,w)=t\left(\frac{ \bd g}{\bd t}(t,w)+t\frac{ \bd^2 g}{\bd t^2}(t,w)\right).\]
Henceforth we fix $w$ and treat $\tilde g$ as a function of the single variable $s$.  Substituting in \eqref{eq:tntn1}, we have
\begin{equation}\label{eq:off1}
	\frac{ 1}{4}\tau(1-\tau)^2 \frac{ d^2\tilde{ g} }{d s^2}+\tau^3C^2\tilde{ g} < 0.
\end{equation}
Let us assume we have a strictly positive function $\tilde{ g}$ on some interval satisfying \eqref{eq:off1}. After making the substitution $u=\sqrt{\frac{ \tau^3 C^2}{\frac{ 1}{4}\tau(1-\tau)^2}}s=\frac{2\tau C}{1-\tau}s$, we have
\begin{equation} \label{eq:seva1}
\frac{ d^2\til{g}}{du^2}+\tilde{g}<0.
\end{equation}
when $\frac{2\tau C}{(1-\tau)}\log A\leq u\leq\frac{2\tau C}{(1-\tau)}\log B$.

Suppose $\tau\geq\frac{ \pi}{2 C \log \frac{ B}{A}+\pi}$.  Then $ \frac{2\tau  C}{(1-\tau)}\log\frac{B}{A}\geq \pi$, so \eqref{eq:seva1} holds on an interval of length $\pi$.  Since $\til g>0$, \eqref{eq:seva1} implies $\frac{d^2 \tilde{ g}}{d u^2}<0$, so $\til{g}$ is strictly concave down on this interval. Therefore, there must be an interval of length $\frac{\p}{2}$ on which $\tilde g$ is strictly increasing or strictly decreasing. If it is strictly increasing, then we can flip it upward using a reflection to make it strictly decreasing and positive. So, after a translation in $u$, $\frac{ d\tilde{ g}}{du}<0\; \text{ on }[0,\frac{ \pi}{2}]$.
In \cite{DiFo77a}, Diederich and  Forn{\ae}ss have shown in the proof of Theorem 6 that \eqref{eq:seva1} has no positive, strictly decreasing solution on $[0,\frac{\pi}{2}]$.  Thus, we have a contradiction, and the conclusion of the theorem follows.
\end{proof}
\subsection{Lower bound for Diederich-Forn{\ae}ss Index  }

In this section we will prove Theorem \ref{tctokh}.  Since the weakly pseudoconvex points may be divided into multiple connected components, we will begin with some technical lemmas that will allow us to deal with each connected component separately and patch the end results together.

 	\begin{lem}\label{sono}
 		Let $\Omega $ be a Hartogs domain in $\mathbb{C}^2$, and let $K$ be a connected component of $\left\{p\in b\Omega:\frac{\partial\tilde\delta}{\partial z}(p)=0\text{ and }\frac{\partial^2\tilde\delta}{\partial z\partial\bar z}(p)=0\right\}$ that is disk-like.  Then there exists a real valued  function $u$ defined in a neighborhood of $K$ such that
 		\begin{equation}\label{tona}
 			 \frac{\bd u}{\bd z}=-\frac{\bd^2\tilde{\delta}}{\bd z\bd\bar{w}}/\frac{\bd\tilde{\delta}}{\bd\bar{w}}
 \end{equation}
 		on $K$ and the complex Hessian of $\rho=\tilde{\delta} e^u $ is positive semi-definite on $K$.
 	\end{lem}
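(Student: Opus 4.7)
The goal is to construct a real-valued $u$ in a neighborhood of $K$ so that $\rho = \tilde{\delta}e^u$ has positive semi-definite complex Hessian on $K$, with $u_z$ given by \eqref{tona} on $K$. I would begin by computing the complex Hessian of $\rho$ at $p \in K$. Since $\tilde{\delta}(p) = 0$, $\tilde{\delta}_z(p) = 0$, and $\tilde{\delta}_{z\bar z}(p) = 0$ on $K$, the $(z,\bar z)$-entry of the Hessian vanishes automatically. A Hermitian $2\times 2$ matrix with a zero on the diagonal can only be positive semi-definite if the corresponding off-diagonal also vanishes; computing the $(z, \bar w)$-entry and setting it to zero yields precisely \eqref{tona}. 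The Hessian on $K$ then has a single potentially nonzero entry $\rho_{w\bar w}|_K = e^u(\tilde{\delta}_{w\bar w} + 2\re(\tilde{\delta}_{\bar w}u_w))$.

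To arrange \eqref{tona}, I would use the ansatz $u = \phi(\tilde{\delta}, w, \bar w)$ for a smooth real $\phi$. Then $u_z = \phi_{\tilde{\delta}}\tilde{\delta}_z = 0$ on $K$ automatically, so \eqref{tona} holds precisely when $\tilde{\delta}_{z\bar w} = 0$ on $K$. Writing $\tilde{\delta}(z, w) = \rho(|z|^2, w)$ in Hartogs coordinates with $t = |z|^2$, one has $\tilde{\delta}_{z\bar w} = \rho_{t\bar w}\bar z$, so \eqref{tona} amounts to showing $\rho_{t\bar w} = 0$ on $K$ (trivial at $z = 0$; the claim at $z \neq 0$ extends there by continuity). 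The disk-like hypothesis is crucial here: it forces $\rho, \rho_t, \rho_{tt}$, and all higher $t$-derivatives of $\rho$ to vanish identically on each disk slice $\{t \in [0, t_0], w = w_0\}$. Combined with the pseudoconvexity of $\Omega$ near $K$, Hartogs symmetry, and the smoothness of $b\Omega$, I expect this high-order $t$-flatness of $\rho$ to imply that the mixed derivative $\rho_{t\bar w}$ must also vanish on the disk.

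Third, I would use the remaining freedom in $\phi$ to arrange $\rho_{w\bar w}|_K \geq 0$. From $|\nabla\tilde{\delta}|^2 = 1$ near $b\Omega$ and $\tilde{\delta}_z = 0$ on $K$, one has $|\tilde{\delta}_w|^2 = 1/4$ on $K$. A short computation then gives
\[
\rho_{w\bar w}|_K = e^{\phi}\Bigl(\tilde{\delta}_{w\bar w} + \frac{1}{2}\phi_{\tilde{\delta}} + 2\re(\tilde{\delta}_{\bar w}\phi_w)\Bigr).
\]
Since $K$ is a closed subset of the compact set $b\Omega$ and $\tilde{\delta}_{w\bar w}$ is continuous, $\tilde{\delta}_{w\bar w}$ is bounded on $K$. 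Taking, for instance, $\phi$ to depend only on $\tilde{\delta}$ with $\phi'(0)$ sufficiently large makes the bracketed quantity non-negative on $K$, and a cutoff extends $\phi$ smoothly to a neighborhood of $K$.

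The main obstacle is verifying that $\rho_{t\bar w}$ vanishes on a disk-like $K$. While the $t$-flatness of $\rho$ on each disk slice is immediate from the definition, translating this into a statement about the $w$-derivative of $\rho_t$ is delicate and relies on the combination of pseudoconvexity, Hartogs symmetry, and the smoothness of $b\Omega$; indeed, this is also exactly the integrability condition one must satisfy in order to solve \eqref{tona} for a real-valued $u$ on a 2-disk in the first place.
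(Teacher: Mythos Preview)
Your computation of the Hessian of $\rho=\tilde\delta e^u$ on $K$ is correct, and you have correctly identified that one needs $u_z$ to satisfy \eqref{tona} and then must make the $(w,\bar w)$-entry nonnegative. The gap is in how you propose to solve \eqref{tona}. Your ansatz $u=\phi(\tilde\delta,w,\bar w)$ forces $u_z|_K=0$, so \eqref{tona} would require $\tilde\delta_{z\bar w}|_K=0$, i.e.\ $\rho_{t\bar w}|_K=0$. This is \emph{not} a consequence of the disk-like hypothesis, pseudoconvexity, or smoothness. The $t$-flatness you observe, namely $\rho_t(t,w_0)\equiv 0$ on the slice $0\le t\le t_0(w_0)$, says nothing about the $\bar w$-derivative of $\rho_t$: one can have $\rho_t(\cdot,w_0)\equiv 0$ on an interval while $\rho_{t\bar w}(\cdot,w_0)\not\equiv 0$. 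Concretely, if near a disk $\{|z|^2\le B,\,w=0\}\subset b\Omega$ one has $\tilde\delta(z,w)=\re\bigl(we^{i\theta(|z|^2)}\bigr)+O(|w|^2)$ with $\theta'\not\equiv 0$ (a ``disk worm''), then on the disk $\rho_{t\bar w}=-\tfrac{i}{2}\theta'(t)e^{-i\theta(t)}\neq 0$. So the ansatz is too rigid: $u$ must be allowed genuine $z$-dependence on $K$.

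The paper solves \eqref{tona} by a different mechanism. The right-hand side of \eqref{tona} and its conjugate are the coefficients of D'Angelo's one-form $\alpha$ restricted to an analytic disk $M\subset K$; this form is known to be \emph{closed} on $M$ (Lemma~5.14 in \cite{Str10}). Since $M$ is simply connected, $\alpha=d_M\tilde u$ for a real $\tilde u$ on $M$, which (after normalizing at the center) patches over all disks in $K$ and extends smoothly to a neighborhood. One then sets $u=\tilde u+s\tilde\delta$ and takes $s$ large to force the $(w,\bar w)$-entry positive, exactly as you suggest in your third step. Thus the disk-like hypothesis enters through simple-connectedness (ensuring the closed form $\alpha$ is exact), not through any vanishing of $\rho_{t\bar w}$.
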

 	\begin{proof}
 		On $K$, we have $\frac{\bd\tilde{\delta}}{\bd z}=0$, so \eqref{eq:deltaw} applies.  In particular, $\left(\frac{\bd\tilde{\delta}}{\bd\bar{w}}\right)^{-1}=4\frac{\bd\tilde{\delta}}{\bd w}$ on $K$.  Hence, if we define
 		\begin{equation}\label{alfi}
 		 \alpha=-\frac{\bd^2\tilde{\delta}}{\bd z\bd\bar{w}}
 		/\frac{\bd\tilde{\delta}}{\bd\bar{w}}dz-\frac{\bd^2\tilde{\delta}}{\bd \bar{ z}\bd w}
 		/\frac{\bd\tilde{\delta}}{\bd w}d\bar{z}, \end{equation}
 then this is a scalar multiple of D'Angelo's one-form $\alpha$ (see \cite{BoSt93} or Section 5.9 in \cite{Str10} for detailed analysis of this form).  Fix an analytic disk $M\subset K$.  Lemma 5.14 in \cite{Str10} implies that $\alpha$ is closed on $M$.  Since $M$ is simply connected, there exists a smooth function $\tilde{u}$ on $M$ such that $d_M\tilde{u}=\alpha$ on $M$.  If we add the requirement that $\tilde u$ is equal to zero at the center of the disk (i.e., the unique point in $M$ at which $z=0$), then $\tilde u$ is uniquely determined.  Hence, we may solve this equation for every analytic disk in $K$ to obtain a smooth function $\tilde u$ satisfying \eqref{tona} on $K$.  We extend $\tilde u$ smoothly to a neighborhood of $K$. Let $u=\tilde{u}+s\tilde{\delta}$ for some number $s>0$ to be chosen later.  On $K$, we have $\frac{\partial\tilde\delta}{\partial z}=0$, so $\frac{\partial u}{\partial z}=\frac{\partial\tilde u}{\partial z}$, and hence \eqref{tona} is satisfied.  Furthermore, $\frac{\partial^2\tilde\delta}{\partial z\partial\bar z}=0$ on $K$, so  $\rho=\tilde{\delta}e^u$ satisfies $\frac{\partial^2\rho}{\partial z\partial\bar z}=0$ on $K$.  Further computation gives us
 \[
   \frac{\partial^2\rho}{\partial z\partial\bar w}=e^u\left(\frac{\bd^2\tilde{\delta}}{\bd z\bd\bar{w}}+\frac{\bd u}{\bd z}\frac{\bd\tilde{\delta}}{\bd\bar{w}}\right)
 \]
 on $K$ and
 \[
   \frac{\partial^2\rho}{\partial w\partial\bar w}=e^u\left(\frac{\bd^2\tilde{\delta}}{\bd w\bd\bar{w}}+\frac{\bd u}{\bd w}\frac{\bd\tilde{\delta}}{\bd\bar{w}}+\frac{\bd\tilde{\delta}}{\bd w}\frac{\bd u}{\bd\bar{w}}\right)
 \]
 on $K$.  Using \eqref{tona}, $\frac{\partial^2\rho}{\partial z\partial\bar w}=0$ on $K$, while \eqref{eq:deltaw} gives us
 \[
   \frac{\partial^2\rho}{\partial w\partial\bar w}=e^u\left(\frac{\bd^2\tilde{\delta}}{\bd w\bd\bar{w}}+\frac{\bd\tilde u}{\bd w}\frac{\bd\tilde{\delta}}{\bd\bar{w}}+\frac{\bd\tilde{\delta}}{\bd w}\frac{\bd\tilde u}{\bd\bar{w}}+\frac{s}{2}\right)
 \]
 on $K$.  For $s$ sufficiently large, we will have $\frac{\partial^2\rho}{\partial w\partial\bar w}>0$ on $K$, so the complex Hessian of $\rho$ will be positive semi-definite on $K$.
 	\end{proof}
 	\begin{lem}\label{fono} 		
 		Let $\Omega$ be a Hartogs domain in $\mathbb{C}^2$. If   $K\subset b\Omega$ is compact, and $\frac{\bd\tilde{\delta}}{\bd z}\neq 0$ on $K$, then there exists a defining function for $\Omega$ in a neighborhood of $K$ which is plurisubharmonic on $b\Omega$ near $K$. 		
 	\end{lem}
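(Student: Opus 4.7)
The plan is to construct an explicit plurisubharmonic defining function using the Hartogs structure together with the implicit function theorem. Writing $\tilde\delta(z,w)=\rho(|z|^2,w)$ as in the proof of Theorem~\ref{main}, the hypothesis $\partial\tilde\delta/\partial z=\rho_t\bar z\neq 0$ on $K$ forces both $z\neq 0$ and $\rho_t\neq 0$ on $K$, and by continuity and compactness these inequalities will extend to a neighborhood of $K$. I would decompose $K$ into connected components and handle each one separately, assuming $\rho_t$ has a fixed sign $\sigma\in\{\pm 1\}$ on the component under consideration.

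Next I would apply the implicit function theorem to solve $\rho(t,w)=0$ as $t=R(w)$, producing a smooth positive function $R$ on a neighborhood $V\subset\mathbb{C}$ of the $w$-projection of $K$; consequently $b\Omega=\{|z|^2=R(w)\}$ in a neighborhood $U\subset\mathbb{C}^2$ of $K$. The candidate defining function would be
\[
  r(z,w)=\sigma\bigl(\log|z|^2-\log R(w)\bigr),
\]
which is smooth on $U$ once $U$ is shrunk to avoid $\{z=0\}$. A sign check in the two cases $\sigma=\pm 1$ confirms that $r<0$ inside $\Omega$, $r=0$ on $b\Omega$, and $r>0$ outside, while $\partial r/\partial z=\sigma/z\neq 0$, so $r$ is a genuine defining function.

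The crux is a short computation exploiting the pluriharmonicity of $\log|z|^2$ on $\{z\neq 0\}$ together with the fact that $\log R(w)$ depends only on $w$. These two facts immediately yield $r_{z\bar z}\equiv 0$ and $r_{z\bar w}\equiv 0$ on all of $U$, together with
\[
  r_{w\bar w}=\sigma\left(\frac{|R_w|^2}{R^2}-\frac{R_{w\bar w}}{R}\right).
\]
On $b\Omega$, where $R=|z|^2$, this simplifies to $r_{w\bar w}=\sigma(|R_w|^2-R_{w\bar w}|z|^2)/|z|^4$. To show this is non-negative, I would compare with the auxiliary defining function $\sigma(|z|^2-R(w))$: its Levi form evaluated on the complex tangent vector $T=(R_w,\bar z)$ reduces to exactly $\sigma(|R_w|^2-R_{w\bar w}|z|^2)$, and pseudoconvexity of $\Omega$ forces this expression to be non-negative on $b\Omega$. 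Hence the complex Hessian of $r$ at each point of $b\Omega\cap U$ is the positive semi-definite diagonal matrix $\mathrm{diag}(0,r_{w\bar w})$, and the conclusion of the lemma follows.

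The main technical point requiring care will be the patching of the local solutions $R$ supplied by the implicit function theorem on a possibly multi-sheeted $w$-projection of $K$. I would address this by covering $K$ with finitely many small open sets on each of which $R$ is uniquely defined by local uniqueness in the implicit function theorem, and then checking agreement on overlaps. Since the conclusion of the lemma is purely local near $K$, this patching issue presents no genuine obstacle.
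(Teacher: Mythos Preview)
Your argument is correct and follows the same overall strategy as the paper: use the implicit function theorem to write $b\Omega$ locally as a graph $|z|^2=R(w)$, build a defining function out of $R$, and deduce plurisubharmonicity on the boundary from the Levi form inequality. The paper chooses the defining function $|z|^2/R(w)-1$, whose complex Hessian has a nonzero off-diagonal entry, so the verification proceeds by checking that both the $(1,1)$ entry and the $2\times 2$ determinant are nonnegative. Your logarithmic choice $\sigma(\log|z|^2-\log R(w))$ is a nice simplification: because $\log|z|^2$ is pluriharmonic and $\log R(w)$ is independent of $z$, the Hessian is already diagonal and the check reduces to a single sign. You also keep explicit track of the sign $\sigma$ of $\rho_t$, which the paper's write-up does not; this is a genuine improvement, since for an ``inner'' boundary piece (where locally $\Omega=\{|z|^2>R(w)\}$) the paper's candidate $|z|^2/R-1$ would have the wrong sign as a defining function and its negation would not be plurisubharmonic.
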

 	\begin{proof}
 		Let $p\in K$.  Suppose $p=(0,w)$ for some $w\in\mathbb{C}$.  Let $\{(z_j,w_j)\}$ be any sequence in $b\Omega$ converging to $(0,w)$.  By restricting to a subsequence, we may assume $\left\{\frac{(z_j,w_j-w)}{\sqrt{|z_j|^2+|w_j-w|^2}}\right\}$ converges to a unit length vector $(u_1,u_2)$ tangential to $b\Omega$ at $p$.  We note that every tangent vector may be obtained in this way (in fact, \cite{Fed59} takes this as the definition of a tangent vector, which agrees with the usual definition on domains with $C^1$ boundaries). Since $\Omega$ is Hartogs, $(e^{i\theta}z_j,w_j)\in b\Omega$ for any $\theta\in\mathbb{R}$, so $(e^{i\theta}u_1,u_2)$ is tangential to $b\Omega$ at $p$ for any $\theta\in\mathbb{R}$.  Hence, $e^{i\theta}u_1\frac{\partial\tilde\delta}{\partial\bar z}(p)+u_2\frac{\partial\tilde\delta}{\partial\bar w}(p)=0$ for any $\theta\in\mathbb{R}$.  Since $\frac{\partial\tilde\delta}{\partial\bar z}(p)\neq 0$, we must have $u_1=0$.  However, this means that every tangent vector at $p$ is of the form $(0,u_2)$, which is inadequate to span a tangent space of $3$ real dimensions.  Therefore, we may assume that $z\neq 0$ whenever $(z,w)\in K$.

 Since $\Omega$ is Hartogs, $\tilde\delta$ depends only on $|z|^2$ and $w$, so in a neighborhood of any point $p\in b\Omega$ for which $\frac{\partial\tilde\delta}{\partial z}(p)\neq 0$, we can use the implicit function theorem to find a strictly positive smooth function $f_p(w)$ such that $(z,w)\in b\Omega$ whenever $|z|^2=f_p(w)$.
 	
 		 Near $p$, our defining function will be
 		$$\rho_p(z,w)=(f_p(w))^{-1}|z|^2-1.$$  The implicit function theorem does not guarantee that $f_p(w)=f_q(w)$ for all $p,q\in b\Omega$ satisfying $\frac{\bd\tilde{\delta}}{\bd z}\neq 0$, but for $q$ sufficiently close to $p$ we do have $f_p(w)=f_q(w)$, so that $\rho_p$ can be extended to a global function on a neighborhood of $K$.  Henceforth, we will suppress the subscript $p$ when writing $f$ and $\rho$.
 		On a neighborhood of $K$, we may compute the first derivatives
 \[
   \frac{\partial\rho}{\partial z}(z,w)=(f(w))^{-1}\bar z\text{ and }\frac{\partial\rho}{\partial w}(z,w)=-(f(w))^{-2}\frac{\partial f}{\partial w}(w)|z|^2,
 \]
 and the second derivatives
 \[
   \frac{\bd^2\rho}{\bd z\bd \bar{z}}(z,w)=(f(w))^{-1}, \frac{\bd^2\rho}{\bd w\bd\bar{z}}(z,w)=-(f(w))^{-2}\frac{\bd f}{\bd w}(w)z,
 \]
 and
 \[
   \frac{\bd^2\rho}{\bd w\bd\bar{w}}(z,w)=-(f(w))^{-2}\frac{\bd^2 f}{\bd w\bd\bar{w}}(w)|z|^2+2(f(w))^{-3}\abs{\frac{\bd f}{\bd w}(w)}^2|z|^2.
 \]
 For $(p_z,p_w)$ in some neighborhood of $K$ in $b\Omega$, we have $|p_z|^2=f(p_w)$.
 			Since $(\xi,\eta)=\left(\frac{\partial f}{\partial w}(p_w),\bar p_z\right)$ spans the complex tangent space at $(p_z,p_w)$, the Levi form at $(p_z,p_w)$ in this direction is given by
 \begin{align*}
   H_\rho(|p_z|^2,p_w,\xi,\eta)&=\frac{\bd^2 \rho}{\bd z\bd\bar{z}}(p_z,p_w)\abs{\frac{\bd f}{\bd w}(p_w)}^2+\frac{\bd^2\rho}{\bd\bar{z}\bd w}(p_z,p_w)\bar p_z\frac{ \bd f}{\bd \bar{w}}(p_w)\\
   &+\frac{\bd^2\rho}{\bd z\bd\bar{w}}(p_z,p_w)z\frac{\bd f}{\bd w}(p_w)+\frac{\bd^2\rho}{\bd w\bd\bar{w}}(p_z,p_w)f(p_w)\\
   &=(f(p_w))^{-1}\abs{\frac{\bd f}{\bd w}(p_w)}^2-\frac{\bd^2 f}{\bd w\bd\bar{w}}(p_w).
 \end{align*}
 			From the pseudoconvexity of $\Omega$, $H_\rho(|p_z|^2,p_w,\xi,\eta)\geq 0$ on $b\Omega$ near $K$. This implies $$\abs{\frac{\bd f}{\bd w}(p_w)}^2\geq f(p_w)\frac{\bd^2 f}{\bd w\bd\bar{w}}(p_w).$$
 For $(p_z,p_w)$ on $b\Omega$ near $K$, the determinant of the complex Hessian of $\rho$ is given by
 \begin{multline*}
   \frac{\bd^2\rho}{\bd z\bd \bar{z}}(p_z,p_w)\frac{\bd^2\rho}{\bd w\bd\bar{w}}(p_z,p_w)-\abs{\frac{\bd^2\rho}{\bd w\bd\bar{z}}(p_z,p_w)}^2=\\
   -(f(p_w))^{-2}\frac{\bd^2 f}{\bd w\bd\bar{w}}(p_w)+(f(p_w))^{-3}\abs{\frac{\bd f}{\bd w}(p_w)}^2.
 \end{multline*}
 This is nonnegative since the Levi form is nonnegative.  Hence, $\rho$ is a defining function that is plurisubharmonic on $b\Omega$ near $K$.
 	\end{proof}
 	
 	\begin{lem}\label{kono}
 		Let $\Omega$ be a smooth, bounded, pseudoconvex domain in $\mathbb{C}^2 $, and let $K$ denoted the set of weakly pseudoconvex points in $b\Omega$. 
 		Let $U_1$ be a neighbourhood of $K$ (not necessarily connected) $U_2$ an open set such that $K\cap\overline{U_2}=\emptyset$ and $b\Omega\subset U_1\cup U_2$.  Let $\rho_1$ be a smooth defining function for $\Omega$ on  $U_1$.  Then, for every $0<\tau_3<1$, there exists a neighborhood $U_3$ of $b\Omega$ such that $\overline{U_3}\subset U_1\cup U_2$ and a smooth defining function $\rho_3$ for $\Omega$ on $U_3$ such that $$\rho_3=\rho_1\text { on }U_1\setminus U_2$$
 and
 		$$ i\bd\dbar(-(-\rho_3)^{\tau_3})\geq iM_3(-\rho_3)^{\tau_3}\bd\dbar\abs{z}^2\text{ on }U_3\cap\Omega.$$
 	\end{lem}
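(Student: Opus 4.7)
The plan is to construct $\rho_3$ by blending $\rho_1$ on a neighborhood of $b\Omega\cap(U_1\setminus U_2)$ with a multiple of the signed distance function $\til\delta$ on a neighborhood of $b\Omega\setminus U_1$, exploiting that $K\cap\overline{U_2}=\emptyset$ forces $b\Omega\cap\overline{U_2}$ to consist entirely of strictly pseudoconvex points.

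First I would choose a smooth cutoff function $\chi$ on a neighborhood of $b\Omega$ with $\chi\equiv 1$ on a neighborhood of $b\Omega\cap(U_1\setminus U_2)$ contained in $U_1\setminus\overline{U_2}$ and $\chi\equiv 0$ on a neighborhood of $b\Omega\setminus U_1$ contained in $U_2\setminus\overline{U_1}$; this is possible because the two target sets are disjoint closed subsets of $b\Omega$. Writing $\rho_1=h_1\til\delta$ on $U_1$ for a smooth positive function $h_1$, I would then set
\[
h_3=\chi h_1+(1-\chi)
\]
on a tubular neighborhood $U_3$ of $b\Omega$ with $\overline{U_3}\subset U_1\cup U_2$, where $\chi h_1$ is understood to vanish outside $\supp(\chi)\subset U_1$. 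Then $h_3>0$ everywhere, and $\rho_3:=h_3\til\delta$ is a smooth defining function on $U_3$ that agrees with $\rho_1$ on a neighborhood of $b\Omega$ inside $U_1\setminus U_2$.

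To verify $i\bd\dbar(-(-\rho_3)^{\tau_3})\geq iM_3(-\rho_3)^{\tau_3}\bd\dbar|z|^2$ on $U_3\cap\Omega$, I would argue region by region. Where $\chi\equiv 1$, $\rho_3=\rho_1$ and the estimate is inherited from the plurisubharmonicity property already carried by $\rho_1$, which in the intended application to Theorem~\ref{tctokh} is produced from Lemmas~\ref{sono} and~\ref{fono} together with the standard Diederich-Forn\ae ss machinery. Where $\chi\equiv 0$, $\rho_3=\til\delta$ and the estimate is the classical Diederich-Forn\ae ss bound at strictly pseudoconvex points: the strict positivity of the Levi form of $\til\delta$, combined with the singular term $\tau_3(1-\tau_3)(-\rho_3)^{\tau_3-2}\,i\bd\rho_3\wedge\dbar\rho_3$ in the Hessian of $-(-\rho_3)^{\tau_3}$, yields the bound for any $\tau_3\in(0,1)$.

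The main obstacle will be the transition region $\{0<\chi<1\}$, where the expansion
\[
\bd\dbar\rho_3=\til\delta\,\bd\dbar h_3+h_3\,\bd\dbar\til\delta+\bd h_3\wedge\dbar\til\delta+\bd\til\delta\wedge\dbar h_3
\]
picks up error terms involving $\bd\chi$, $\bd\dbar\chi$, and $\bd h_1$. The key point is that $\supp(\bd\chi)\cap K=\emptyset$, since $K\subset U_1\setminus\overline{U_2}$ is contained in the region where $\chi\equiv 1$; hence the transition region lies over the strictly pseudoconvex part of $b\Omega$. Consequently, the restriction of the Levi form of $\rho_3$ to complex tangent vectors on $b\Omega$ equals $h_3$ times the Levi form of $\til\delta$ and is uniformly positive definite there. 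As in the standard Diederich-Forn\ae ss argument, the singular normal contribution dominates as $\til\delta\to 0$, and a Cauchy-Schwarz absorption of the mixed normal-tangential terms produces the stated inequality once $U_3$ is shrunk tangentially toward $b\Omega$ and $M_3>0$ is chosen uniformly.
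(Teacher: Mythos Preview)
Your approach is essentially the same as the paper's: blend $\rho_1$ with a second defining function via a cutoff supported in $U_1$, and handle the transition region by exploiting that it lies away from $K$, hence over strictly pseudoconvex boundary points. The paper differs only in two minor technical choices: it takes the second defining function to be a strictly plurisubharmonic $\rho_2$ on $U_2\cap U_3$ rather than $\tilde\delta$, and it controls the transition region via the observation that $e^{\lambda_3\rho_3}-1$ is strictly plurisubharmonic on $\overline{U_3\cap U_1\cap U_2}$ for large $\lambda_3$, which it then unwinds into the required lower bound for $i\partial\bar\partial\rho_3$, rather than the direct Diederich--Forn{\ae}ss-style Cauchy--Schwarz absorption you sketch. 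These are interchangeable standard devices and lead to the same conclusion. You are also right to flag that the inequality on $(U_1\setminus U_2)\cap U_3\cap\Omega$ cannot be obtained from the stated hypotheses on $\rho_1$ alone; the paper's own proof only verifies the inequality on $U_3\cap\overline{U_1\cap U_2}\cap\Omega$ and, like you, tacitly relies on the intended application to supply the estimate for $\rho_1$ near $K$.
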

 	\begin{proof}
 		
 		Let $U_3$ be a neighborhood of $b\Omega$ such that $\overline{U_3}\subset U_1\cup U_2$.  Fix $\chi\in C^{\infty}(\mathbb{C}^2)$ such that $0\leq\chi\leq 1$, $\chi\equiv 1$ on a neighborhood of $\overline{U_3\setminus U_2}$, and $\chi\equiv 0 $ on a neighborhood of $\overline{U_3\setminus U_1}$.  Since $\Omega$ is strictly pseudoconvex on $\overline{U_2}$, we may let $\rho_2$ be a strictly plurisubharmonic defining function for $\Omega$ on $U_2\cap U_3$ (this may require shrinking $U_3$).  Set $\rho_3=\chi\rho_1+(1-\chi)\rho_2$.  We know $e^{\lambda_3\rho_3}-1$ is strictly plurisubharmonic on $\overline{U_3\cap U_1\cap U_2}$ for $\lambda_3>0$ sufficiently large (see, for example, Theorem 3.4.4 \cite{ChSh01}).
 		More precisely, there exists some $N_3>0$ such that $$i\bd\dbar(e^{\lambda_3\rho_3}-1)>iN_3\bd\bar{\bd}\abs{z}^2$$ on $\overline{U_3\cap U_1\cap U_2}$.  We compute
 		$$ i\bd\bar{\bd}(e^{\lambda_3\rho_3}-1)=i\lambda_3e^{\lambda_3\rho_3}(\bd\bar{\bd}\rho_3+\lambda_3\bd\rho_3\wedge\bar{\bd}\rho_3 )  \geq i N_3 \bd\bar{\bd}\abs{z}^2.$$
 		Hence,
 		\begin{equation}\label{phero1}
 		i\bd\bar{\bd}\rho_3\geq i e^{-\lambda_3\rho_3}\frac{N_3}{\lambda_3}\bd\bar{\bd}\abs{z}^2-i\lambda_3\bd\rho_3\wedge\bar{\bd}\rho_3
 		\end{equation}
on $\overline{U_3\cap U_1\cap U_2}$. 		On the other hand, we want to show $$i\bd\bar{\bd}(-(-\rho_3)^{\tau_3})\geq i M_3(-\rho_3)^{\tau_3}\bd\bar{\bd}\abs{z}^2  $$
 		on $U_3\cap\overline{ U_1\cap U_2}\cap\Omega$.  Expanding the left hand side, this is equivalent to $$i\tau_3(1-\tau_3)(-\rho_3)^{\tau_3-2}\bd\rho_3\wedge\bar{\bd}\rho_3+i\tau_3(-\rho_3)^{\tau_3-1}\bd\bar{\bd}\rho_3\geq i M_3(-\rho_3)^
 		{\tau_3}\bd\bar{\bd}\abs{z}^2  $$
 	on $U_3\cap\overline{ U_1\cap U_2}\cap\Omega$.  To show this, we need
 		\begin{equation}\label{phero2}
 		i\bd\bar{\bd}\rho_3\geq i\frac{M_3}{\tau_3}(-\rho_3)\bd\bar{\bd}\abs{z}^2-i(1-\tau_3)(-\rho_3)^{-1} \bd\rho_3\wedge\bar{\bd}\rho_3
 		\end{equation}
 		on $U_3\cap\overline{ U_1\cap U_2}\cap\Omega$.  Since $\rho_3$ is close to zero near the boundary, the first term on the right hand side of \eqref{phero1} is greater than the first term on the right hand side of\eqref{phero2}. Similarly, since $0<\tau_3<1,$ the second term on the right hand side of \eqref{phero1} bounds the second term on the right hand side of \eqref{phero2} when $\rho_3$ is sufficiently close to zero.  Hence, we may shrink $U_3$ sufficiently small so that \eqref{phero2} holds.

 	\end{proof}


With these tools in place, we are finally ready to prove our main theorem for this section.

\begin{proof}[Proof of Theorem \ref{tctokh}]
First we will consider the possibility that $K$ is an annulus-like connected component of $\left\{p\in b\Omega:\frac{\partial\tilde\delta}{\partial z}(p)=0\text{ and }\frac{\partial^2\tilde\delta}{\partial z\partial\bar z}(p)=0\right\}$.
By assumption, there exist constants $B\geq A>0$ and $C>0$ such that $A\leq|z|^2\leq B$ on $K$, $\abs{\frac{\partial\tilde\delta}{\partial w\partial\bar z}}\leq\frac{C}{|z|}$ on $K$, and $\frac{ 2\tau C}{1-\tau}\log B- \frac{ 2\tau C}{1-\tau}\log  A <\pi$.  By this last hypothesis, there exists a constant $\phi$  such that $$ \sin\left(\frac{ 2\tau C}{1-\tau}\log t+\phi\right)>0$$ on $A\leq t\leq B.$  There then exists $\eps>0$ such that a positive  solution of
 \begin{equation}\label{ cin}
 -\frac{ 1}{4}\tau(1-\tau)^2 \left(\frac{ \bd g}{\bd t}(t)+t\frac{ \bd^2 g}{\bd t^2}(t)\right)-\frac{C^2}{t^2}\tau^3 tg(t)\geq \eps \tau^3t\frac{C^2}{t^2}
 \end{equation} on $A\leq t\leq B $ is given by
    $g(t)=c_1\cos(\frac{ 2\tau C}{1-\tau}\log t)+c_2\sin(\frac{ 2\tau C}{1-\tau}\log t) -\eps$, where $c_1=\sin\phi$ and $c_2=\cos\phi$.
     Then \eqref{ cin} implies
\begin{equation}\label{tno}
-\frac{ 1}{4}\tau(1-\tau)^2 \left(\frac{ \bd g}{\bd t}(t)+t\frac{ \bd^2 g}{\bd t^2}(t)\right)>\tau^3 tg(t)\frac{C^2}{t^2} .
\end{equation}
Our hypotheses now imply that $g$ satisfies \eqref{eq:tau1}.

 Define $h(z,w)=(g(|z|^2))^{1-\tau}, $ and $\sigma=-h(-\tilde{\delta})^\tau$.  Following the proof of Theorem \ref{main} backward from \eqref{eq:tau1}, we see that $H_{\sigma}\geq 0 $ on $U\cap\Omega$, where $U$ is some neighborhood of $K$ (see similar arguments in \cite{Har18} and
 \cite{Liu17a}). Therefore, $\sigma$ is plurisubharmonic in a neighborhood of $U\cap\Omega$.

  Next, suppose $K$ is a disk-like connected component of\\ $\left\{p\in b\Omega:\frac{\partial\tilde\delta}{\partial z}(p)=0\text{ and }\frac{\partial^2\tilde\delta}{\partial z\partial\bar z}(p)=0\right\}$.  We have shown in Lemma  \ref{sono} that there exists a real-valued function $u$ satisfying \eqref{tona} on $K$. Then on $K$,
 \begin{equation}\label{sin}
  \frac{\bd u}{\bd z}=-\frac{\bd^2\tilde{\delta}}{\bd z\bd\bar{w}}/\frac{\bd\tilde{\delta}}{\bd\bar{w}}=-4\frac{\bd\til{\delta}}{\bd w}\frac{\bd^2\til{\delta}}{\bd z\bd\bar{w}}.
  \\
   \end{equation}
   Since $\frac{\partial}{\partial\bar z}$ is tangential to $K$, we may differentiate both sides of \eqref{sin} to obtain
   \[\frac{\bd^2 u}{\bd\bar{z}\bd z}=-4\frac{\bd\tilde{\delta}}{\bd w}\frac{\bd^3\til{\delta}}{\bd\bar{z}\bd\bar{w}\bd z}-4\frac{\bd^2\til{\delta}}{\bd\bar{z}\bd w}\frac{\bd^2\til{\delta}}{\bd z\bd\bar{w}}\]
   on $K$.  Since $u$ is real-valued, this can be written
   \begin{multline*}
   \frac{\bd^2 u}{\bd\bar{z}\bd z}=-4\re\left(\frac{\bd\tilde{\delta}}{\bd w}\frac{\bd^3\til{\delta}}{\bd\bar{z}\bd\bar{w}\bd z}\right)-4\abs{\frac{\bd^2\til{\delta}}{\bd z\bd\bar{w}}}^2\\
   =-2\frac{\bd^2}{\bd\bar{z}\bd z}\abs{\frac{\bd\tilde{\delta}}{\bd w}}^2+2\abs{\frac{\bd^2\til{\delta}}{\bd z\bd\bar{w}}}^2+2\abs{\frac{\bd^2\til{\delta}}{\bd z\bd w}}^2-4\abs{\frac{\bd^2\til{\delta}}{\bd z\bd\bar{w}}}^2.
   \end{multline*}
   Since \eqref{eq:deltaw} implies that $\abs{\frac{\bd\tilde{\delta}}{\bd w}}^2$ is constant and equations \eqref{eq:wz_derivative} and \eqref{eq:bar_w_z_derivative} imply that $\abs{\frac{\bd^2\til{\delta}}{\bd z\bd\bar{w}}}^2=\abs{\frac{\bd^2\til{\delta}}{\bd z\bd w}}^2$, we are left with
   \[\frac{\bd^2 u}{\bd\bar{z}\bd z} =0\]
   on $K$.  Let $\hat{h}(z,w)=e^{\tau u(z,w)-s\abs{z}^2}$ for $s>0$ to be chosen later.  Let $p\in\Omega$ sufficiently close to the boundary of $\Omega$ satisfy $\pi(p)\in K$.  As in the proof of Theorem \ref{main} we set
 \[
   a(p)=4\hat{h}(p)\tau\abs{ \rho_{tw}(\pi(p))}^2 |p_z|^2-\frac{ \bd^2\hat{ h}}{\bd z\bd\overline{ z}}(p),
   \]
   so that
   \begin{multline*}
    a(p)=e^{\tau u(p)-s\abs{p_z}^2}4\tau \abs{\rho_{t w}(\pi(p))}^2 |p_z|^2\\-e^{\tau u(p)-s\abs{p_z}^2}\abs{\tau\frac{\bd u}{\bd z}(p)-s\overline{p_z}}^2-e^{\tau u(p)-s\abs{p_z}^2}\left(\tau\frac{ \bd^2 u}{\bd z\bd\bar{z}}(p)-s\right).
   \end{multline*}
   Notice that \eqref{sin}, \eqref{eq:bar_w_z_derivative}, and \eqref{eq:phi} imply
   \begin{multline*}
   \re\left(\frac{\bd u}{\bd z}(\pi(p))(\pi(p))_z\right)=-4\re\left(\frac{\bd\tilde{\delta}}{\bd w}(\pi(p))\frac{\bd^2\tilde{\delta}}{\bd z\bd\bar{w}}(\pi(p))(\pi(p))_z\right)\\
   =-4|(\pi(p))_z|^2\re(\rho_{w}(\pi(p))\rho_{t\bar{w}}(\pi(p)))=0. \end{multline*}
   Hence, using \eqref{eq:deltaw}, \eqref{sin}, and \eqref{eq:bar_w_z_derivative}, we have
   \begin{multline*}
     \abs{\tau\frac{\bd u}{\bd z}(\pi(p))-s\overline{(\pi(p))_z}}^2=\tau^2\abs{\frac{\bd u}{\bd z}(\pi(p))}^2+s^2|(\pi(p))_z|^2\\
     =4\tau^2|(\pi(p))_z|^2|\rho_{tw}(\pi(p))|^2+s^2|(\pi(p))_z|^2.
   \end{multline*}
   Since $u$ is at least $C^2$ and $\abs{\pi(p)-p}=\delta(p)$, we have
   \[
     \abs{\tau\frac{\bd u}{\bd z}(p)-s\overline{p_z}}^2\leq 4\tau^2|p_z|^2|\rho_{tw}(\pi(p))|^2+s^2|p_z|^2+O(-\tilde\delta(p)).
   \]
   Furthermore,
   \[
     \frac{ \bd^2 u}{\bd z\bd\bar{z}}(p)\leq\frac{ \bd^2 u}{\bd z\bd\bar{z}}(\pi(p))+O(-\tilde\delta(p)),
   \]
   so since $\frac{ \bd^2 u}{\bd z\bd\bar{z}}(\pi(p))=0$, this allows us to simplify $a$ as follows:
   \[
    a(p)\geq e^{\tau u(p)-s\abs{p_z}^2}\left(s+4\tau(1-\tau)|p_z|^2|\rho_{tw}(\pi(p))|^2-s^2|p_z|^2\right)-O(-\tilde\delta(p)).
   \]

  Also as in the proof of Theorem \ref{main}, we set
   \[ b(p)=\frac{ 1}{4} \hat{h}(p)\tau(1-\tau)=\frac{1}{4}e^{\tau u(p)-s\abs{p_z}^2}\tau(1-\tau) \]
   and
   \[
     c(p)=\hat{h}(p)\tau\rho_{t\overline{ w}}(\pi(p))\overline{p_z}+\tau\frac{ \bd\hat{ h}}{\bd z}(p)\frac{ \bd\til{\delta}}{\bd\bar{w}}(p).
   \]
   Using \eqref{sin} and \eqref{eq:bar_w_z_derivative}, we may simplify and obtain
   \[
   	\abs{c(p)}\leq\tau(1-\tau) e^{\tau u(p)-s\abs{p_z}^2}\rho_{t\overline{ w}}(\pi(p))\overline{p_z}-s\tau e^{\tau u(p)-s\abs{p_z}^2}\overline{p_z}\frac{\bd\til{\delta}}{\bd\bar{w}}(p)+O(-\tilde\delta(p)). \]
   Hence, using \eqref{eq:deltaw} and \eqref{eq:phi},
   \begin{multline*}
     a(p)b(p)-|c(p)|^2\geq\\
     \frac{1}{4}\tau(1-\tau)e^{2\tau u(p)-2s\abs{p_z}^2}\left(s-s^2|p_z|^2\right)-\frac{1}{4}\tau^2e^{2\tau u(p)-2s|p_z|^2}s^2|p_z|^2-O(-\tilde\delta(p)).
   \end{multline*}
   Since $b(p)>0$, the matrix $\begin{pmatrix}a(p)&c(p)\\\overline{c(p)}&b(p)\end{pmatrix}$ is positive definite provided that $a(p)b(p)-|c(p)|^2>0$, and this is true if $1-\tau>s|p_z|^2$ and $-\tilde\delta(p)$ is sufficiently small.  We assume that $|p_z|^2\leq B$ on $K$.  Hence, $a(p)b(p)-|c(p)|^2>0$ if $s<\frac{1-\tau}{B}$ and $-\tilde\delta(p)$ is sufficiently small.  As a result, $H_{\sigma}\geq 0$ on $U\cap\Omega$ for some neighborhood $U$ of $K$.

   Finally, suppose $K$ is a set of weakly pseudoconvex points satisfying the hypotheses of Lemma \ref{fono}.  Then Lemma \ref{fono} gives us a defining function which is plurisubharmonic on $K$.  Following the construction of \cite{FoHe07}, we may construct a defining function with a Diederich-Forn{\ae}ss Index of $\tau$ near $K$.

   Finally, we decompose the weakly pseudoconvex points into $\{K_j\}$ where each $K_j $ is either disk-like, annulus-like or satisfies the hypotheses of Lemma \ref{fono}.  Let $U_1$ be a neighborhood of the weakly pseudoconvex points such that each connected component of $U_1$ contains exactly one component $K_j$.  Lemma \ref{kono} shows that there exists an appropriate defining function defined on some neighborhood of $b\Omega$.

\end{proof}


As an important special case, we now show that our results are sharp on the worm domain.

  \begin{proof}[Proof of Corollary \ref{cor:worm_domain}]
  	
  	Let $M_r=\{(z,0):1\leq|z|\leq r\}$ denote the annulus containing all of the weakly pseudoconvex points in the boundary of $\Omega_r$.  Let $\rho_r$ be the defining function given by Definition \ref{worm}.  Since $r$ is taken to be fixed, we will omit the subscript $r$ in the following.  For $(z,w)\in M,$ we have 	
  	\begin{align}
\nonumber  			\frac{ \bd \rho}{\bd w}(z,w)&=e^{-i\log\abs{z}^2},\\
  \label{eq:real}\frac{\bd^2 \rho}{\bd \bar{z}\bd w}(z,w)&= \frac{-iz}{\abs{z}^2} e^{-i \log\abs{z}^2}\text{, and }\\
\nonumber  			\frac{\bd^2 \rho}{\bd z\bd w}(z,w)&= \frac{-i\bar{z}}{\abs{z}^2} e^{-i \log\abs{z}^2}.
  		\end{align}
  		We also have
  		$ \frac{\bd \rho}{\bd z}(z,w)=0.$ Let $w=u+iv$, and $z=x+iy$.  Since
  		\[\frac{\bd\rho}{\bd w}(z,w)=\frac{1}{2}\frac{\bd \rho}{\bd u}(z,w)-i\frac{1}{2}\frac{\bd \rho }{\bd v}(z,w)=\cos(\log\abs{z}^2)-i\sin(\log\abs{z}^2),\]
  		the real normal vector will be
  		$$\grad \rho(z,w)=(0,0,2\cos(\log\abs{z}^2), 2\sin(\log\abs{z}^2)),$$
  		and the real tangent space is spanned by
  		$T_1=(1,0,0,0)$, $T_2=(0,1,0,0)$, and  $T_3=(0,0,-\sin(\log\abs{z}^2),\cos(\log\abs{z}^2))$.
  		We denote $\partial_\nu=\frac{1}{2}\bigtriangledown\rho\cdot \bigtriangledown$ and $\partial_3=T_3\cdot\nabla$.  In this notation,
  		\[\frac{\bd}{\bd w}=\frac{1}{2}e^{-i\log\abs{z}^2}(\partial_\nu-i\partial_3).\]
  		Hence,
  		\[\frac{\bd^2 \til{\delta}}{\bd\bar{z}\bd w}(z,w)=\frac{1}{2}e^{-i\log\abs{z}^2}\left(\partial_\nu\frac{\bd \til{\delta}}{\bd\bar{z}}-i\partial_3\frac{\bd \til{\delta}}{\bd\bar{z}}\right).\]
  		Since $\partial_3$ and $\frac{\bd}{\bd\bar{z}}$ are tangential, $\partial_3\frac{\bd\tilde{\delta}}{\bd\bar{z}}(z,w)=\abs{\bigtriangledown\rho(z,w)}^{-1}\partial_3\frac{\bd \rho}{\bd\bar{z}}(z,w)$ (see, for example, the discussion preceding (2.9) in \cite{HaRa13} for justification).
  		 Using \eqref{eq:gradd} to differentiate the gradient in the normal direction, we obtain $\partial_\nu\frac{\bd\til{\delta}}{\bd\bar{z}}(z,w)=0$. We get
  		\[\frac{\bd^2\til{\delta}}{\bd\bar{z}\bd w}(z,w)=-i\frac{1}{2}e^{-i\log\abs{z}^2}\abs{\grad\rho(z,w)}^{-1}\partial_3\frac{\bd\rho}{\bd\bar{z}}(z,w).\]

  		On the other hand,
  \[
    \partial_3=i e^{i\log\abs{z}^2}\frac{\bd}{\bd w}-i e^{-i\log\abs{z}^2}\frac{\bd}{\bd\bar w},
  \]
  		so
  		\[\partial_3\frac{\partial\rho}{\partial\bar z}(z,w)=ie^{i\log\abs{z}^2}\frac{\bd^2\rho}{\bd w\bd\bar{z}}(z,w)-ie^{-i\log\abs{z}^2}\frac{\bd^2\rho}{\bd\bar{w} \bd\bar{z}}(z,w).\]
  		Substituting \eqref{eq:real}, we compute
  		\[\frac{\bd^2\til\delta}{\bd\bar{z}\bd w}(z,w)=\frac{i}{4}e^{-i\log\abs{z}^2}\frac{ 2z}{\abs{z}^2}, \] so	
  		\[\abs{\frac{\bd^2\til\delta}{\bd\bar{z}\bd w}(z,w) }=\frac{1}{2}\frac{ 1}{\abs{z}}.\]
  		From Theorem \ref{main}, the Diederich-Forn{\ae}ss Index is less than $\frac{ \pi}{2C \log  r^2+\pi}$, for every $C<\frac{1}{2}$, so the Diederich-Forn{\ae}ss Index is less than or equal to $\frac{ \pi}{\log  r^2+\pi}$. On the other hand, Theorem \ref{tctokh} guarantees that the Diederich-Forn{\ae}ss Index is at least $\frac{ \pi}{\log  r^2+\pi}$, so the result is sharp.
  			\end{proof}

\section{Existence of a Family of Good Vector Fields}

  In this section, we will relate the Diederich-Forn{\ae}ss Index to other sufficient conditions for global regularity.  In \cite{BoSt92}, Boas and Straube defined Hartogs domains that were ``nowhere worm-like," and showed that Condition $R$ is obtained on such domains.  In the notation of the present paper, $\Omega$ is nowhere worm-like if and only if $\frac{\partial^2\tilde\delta}{\partial z\partial\bar w}=0$ on any annulus in the boundary of $\Omega$.  Therefore, Theorem \ref{tctokh} implies that the Diederich-Forn{\ae}ss Index for a nowhere worm-like domain is equal to one.

	\begin{proof}[Proof of Theorem \ref{thm:good_vector_field}]
If a family of good vector fields exists, then the hypotheses of Theorem 2.11 in \cite{Har18} are satisfied, and hence the Diederich-Forn{\ae}ss Index of $\Omega$ is equal to one.
		
		Conversely, suppose that the Diederich-Forn{\ae}ss Index of $\Omega$ is equal to one.  We will consider each connected component of the set of weakly pseudoconvex points separately.  Since a disk is simply-connected, results of Boas and Straube \cite{BoSt93} can be used immediately to prove the existence of a family of good vector fields in a neighborhood of a disk in the boundary.  For sets $K$ satisfying the hypotheses of Lemma \ref{fono}, we have a defining function that is plurisubharmonic on $K$, so results of Boas and Straube \cite{BoSt91} can be used to prove the existence of a family of good vector fields.  This leaves us with the case of an annulus in the boundary.

	From Theorem \ref{main}, if the Diederich-Forn{\ae}ss Index is equal to one, then for any annulus $M$ in $b\Omega$, the constants $A$, $B$, and $C$ given by Theorem \ref{main} must either satisfy $A=B$ or for every $C>0$ there exists $(z_C,w_C)\in M$ such that $\abs{\frac{\partial^2\tilde\delta}{\partial w\partial\bar z}(z_C,w_C)}\leq\frac{C}{|z_C|}$.  When $A=B$, the result follows from Example 3 in \cite{BoSt92}.  In the other case, compactness of $M$ (and $z\neq 0$ on $M$) guarantees the existence of $(z_0,w_0)\in M$ such that $\abs{\frac{\bd^2\tilde{\delta}}{\bd w\bd\bar{z}}(z_0,w_0)}=0$.  Due to the circular symmetry of the Hartogs domain, $\frac{\bd^2\tilde{\delta}}{\bd w\bd\bar{z}}(z_0e^{i\theta},w_0)=0$ for all $\theta\in\mathbb{R}$. Hence D'Angelo's 1-form $\alpha$ (see \eqref{alfi}) satisfies $\alpha=0$ on some circle in $M$.  This shows that the cohomology class represented by $\alpha$ is trivial on $M$.  By Remark 5 in Section 4 of \cite{BoSt93}, there exists a family of good vector fields in a neighborhood of $M$.

  	  \end{proof}

  	 \section{A Necessary Condition for the Existence of a Strong Stein Neighborhood Basis }
  	
  	  In \cite{BeFo78}, Bedford and Forn{\ae}ss introduced a general criteria for the existence of a Stein neighborhood basis.  On a Hartogs domain, this criteria is relatively easy to compute explicitly.  Let $M$ be the annulus in $b\Omega$ given in the statement of Theorem \ref{ostnb}.  Let $\gamma_1$ denote the boundary component of $M$ parameterized by $(\sqrt{A}e^{i\theta},w)$ for $\theta\in\mathbb{R}$.  For $\alpha$ defined by \eqref{alfi}, we wish to compute $c_1=\int_{\gamma_1}\alpha$.  We have
  	  \[
  	  c_1=\int_{\gamma_1=\sqrt{ A}e^{i\theta}} -\frac{\bd^2\tilde{\delta}}{\bd z\bd\bar{w}}(z,w)
  	  /\frac{\bd\tilde{\delta}}{\bd\bar{w}}(z,w)dz-\frac{\bd^2\tilde{\delta}}{\bd \bar{ z}\bd w}(z,w)
  	  /\frac{\bd\tilde{\delta}}{\bd w}(z,w) d\bar{z}
  \]
  Using \eqref{eq:bar_w_z_derivative}, we have
  \begin{align*}
  	   c_1&=\int_{\gamma_1=\sqrt{ A} e^{i\theta}}-\frac{\rho_{t\bar{ w}}(\abs{ z}^2,w)\bar{z}}{\rho_{\bar{w}}(\abs{z}^2,w)}dz-\frac{\rho_{t w}(\abs{ z}^2,w)z}{\rho_{w}(\abs{z}^2,w)}d\bar z \\
  &=\int_0^{2\pi}-2\re\left(\frac{\rho_{t\bar{ w}}(A,w)}{\rho_{\bar{w}}(A,w)}i A \right)d\theta.
  \end{align*}
  Since the integrand is now constant with respect to $\theta$, we have
\begin{equation}\label{town}
  	   c_1=-2\re\left(2\pi i A\frac{\rho_{t\bar{ w}}(A,w)}{\rho_{\bar{w}}(A,w)}\right).
  	  \end{equation}

Let $\omega(z)=\frac{c_1}{4\pi}\log\frac{\abs{z}^2}{B}$, and let $\gamma_0$ denote the boundary component of $M$ parameterized by $(\sqrt{B}e^{i\theta},w)$ for $\theta\in\mathbb{R}$.  Clearly $\omega$ is harmonic and $\omega(z)=0$ for $z\in\gamma_0$.  If we define $d^c\omega=i(\bar{\bd}-\bd)\omega$, then
\[
  	  	\int_{\gamma_1} d^c\omega = i\int_{\gamma_1}\frac{c_1}{4\pi\bar z}d\bar{z}-i\int_{\gamma_1}\frac{ c_1}{4\pi z}dz=c_1.
  \]
When $\abs{z}^2=A$, we have $\omega(z)=a_1$, where $a_1$ is the constant given by
\begin{equation}
\label{eq:a_1}
  a_1=\frac{c_1}{4\pi}\log\frac{A}{B}.
\end{equation}
Using Bedford and Forn{\ae}ss's main result in \cite{BeFo78}, we immediately obtain
  	  \begin{thm}[Bedford and Forn{\ae}ss]\label{Bed}
  	  	Let $ \Omega\subset\mathbb{C}^2$ be a Hartogs domain with $C^4$ boundary that is strongly pseudoconvex except on an annulus $M=\{(z,w):A\leq|z|^2\leq B\}\subset b\Omega$.  Let $c_1$ and $a_1$ be given by \eqref{town} and \eqref{eq:a_1}.  If $\abs{a_1}<\pi$ then $\bar{\Omega}$ admits a Stein neighborhood basis, and if $\abs{a_1}>\pi$ then $\bar{\Omega}$ does not admit a Stein neighborhood basis.  	  	
  	  \end{thm}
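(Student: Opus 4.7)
The plan is to reduce Theorem \ref{Bed} to the main theorem of Bedford and Forn{\ae}ss \cite{BeFo78}, which gives a criterion for Steinness of $\overline\Omega$ in terms of a harmonic function on the Levi-flat part of $b\Omega$ whose boundary integrals involve D'Angelo's 1-form $\alpha$. The hypotheses have been arranged so that the Levi-flat part of $b\Omega$ is precisely the annulus $M$, whose boundary components are the two circles $\gamma_0$ (at $|z|^2=B$) and $\gamma_1$ (at $|z|^2=A$). The first step is therefore to verify that under the strong pseudoconvexity hypothesis away from $M$, Bedford and Forn{\ae}ss's criterion applies and reduces to comparing $|a_1|$ with $\pi$ on this single annular component.

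Next I would justify each of the computations leading to \eqref{town} and \eqref{eq:a_1}. The form $\alpha$ defined in \eqref{alfi} is closed on $M$ by Lemma 5.14 of \cite{Str10} cited in the proof of Lemma \ref{sono}; hence the periods $c_1=\int_{\gamma_1}\alpha$ are well-defined and independent of the parametrization. Plugging in $\tilde\delta(z,w)=\rho(|z|^2,w)$, using $\frac{\partial\tilde\delta}{\partial\bar z}=\rho_t(|z|^2,w)z$ and \eqref{eq:bar_w_z_derivative}, the integrand along $\gamma_1$ simplifies to a constant in $\theta$, yielding \eqref{town}. The harmonic function $\omega(z)=\frac{c_1}{4\pi}\log\frac{|z|^2}{B}$ is then chosen to satisfy the two conditions needed by Bedford and Forn{\ae}ss: $\omega\equiv 0$ on $\gamma_0$ and $\int_{\gamma_1}d^c\omega=c_1$. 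A direct computation of $\omega|_{\gamma_1}$ gives exactly the constant $a_1$ of \eqref{eq:a_1}.

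With the correspondence between the abstract Bedford--Forn{\ae}ss invariant and the explicit quantity $a_1$ established, the conclusion follows by direct citation of their main theorem: when $|a_1|<\pi$, the harmonic extension used to modify the plurisubharmonic candidate near $M$ stays within the admissible range and produces a Stein neighborhood basis, whereas if $|a_1|>\pi$ their obstruction argument (based on the failure of a harmonic function with prescribed boundary periods to remain small enough) rules out a Stein neighborhood basis.

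The main obstacle is not in any one computation but in confirming that the Hartogs setting fits cleanly into Bedford and Forn{\ae}ss's framework: one must check that $M$ is a maximal complex-tangential leaf of $b\Omega$, that the orientations of $\gamma_0,\gamma_1$ used in their theorem match those chosen here, and that no additional periods arise from other topological cycles on $M$. Since $M$ is an annulus, its first homology is generated by either $\gamma_0$ or $\gamma_1$, and the hypothesis of strong pseudoconvexity off $M$ ensures that there are no other Levi-flat components contributing to the criterion. Once this matching is verified, the theorem is a direct translation of the result in \cite{BeFo78} into the Hartogs coordinates.
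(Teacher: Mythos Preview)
Your proposal is correct and matches the paper's approach: the paper carries out exactly the computations you describe (identifying $c_1=\int_{\gamma_1}\alpha$ via \eqref{eq:bar_w_z_derivative}, constructing $\omega(z)=\frac{c_1}{4\pi}\log\frac{|z|^2}{B}$, and checking $\omega|_{\gamma_0}=0$, $\int_{\gamma_1}d^c\omega=c_1$, $\omega|_{\gamma_1}=a_1$) in the paragraphs immediately preceding Theorem~\ref{Bed}, and then states the theorem as an immediate consequence of the main result in \cite{BeFo78} without further proof. Your additional remarks about checking orientations and the absence of other Levi-flat components are reasonable due diligence but go slightly beyond what the paper spells out.
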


  	
  We first carry out some computations to rephrase Bedford and Forn{\ae}ss's result in terms of the curvature term that we have been studying in this paper:
  	  \begin{proof}[Proof of Theorem \ref{ostnb}]

  	  First, we assume we have $\abs{\frac{ \bd^2\tilde{\delta}}{\bd z\bd\bar{w}}(z,w)}<\frac{ \pi}{2\sqrt{ A}\abs{\log\frac{ A}{B}}}$ when $|z|^2=A$. Using \eqref{eq:bar_w_z_derivative}, we have   	  	
  	  	$$\abs{\frac{ \bd^2\tilde{\delta}}{\bd z\bd\bar{w}}(z,w)}=\abs{ \rho_{t\bar{w}}(A,w)\bar{z}}=\abs{ \rho_{t\bar{w}}(A,w)}\sqrt{ A} . $$
  	  	Using \eqref{eq:deltaw} and \eqref{town}, we have $\abs{c_1}< 8\pi A\frac{ \abs{\frac{ \bd^2\tilde{\delta}}{\bd z\bd\bar{w}}(z,w)}}{\sqrt{A}}$, so \eqref{eq:a_1} gives us
  	  	\[
   |a_1|=\abs{ \frac{ c_1}{4\pi}\log\frac{ A}{B}}< 2 \sqrt{A}\abs{\frac{ \bd^2\tilde{\delta}}{\bd z\bd\bar{w}}(z,w)}\abs{ \log\frac{A}{B}} <\pi.
  \]
  	  	By Theorem \ref{Bed}, this implies a Stein neighborhood basis exists.

 Next, we assume  $\abs{\frac{\bd^2\til{\delta}}{\bd z\bd\bar{w}}(z,w)}>\frac{\pi}{2\sqrt{A}\abs{\log\frac{A}{B}}}$ when $\abs{z}^2=A$.
 Observe that the Taylor series in $w$ for $\til{\delta}$ near $M$ must be of the form
 $$ \til{\delta}(z,w)=Re (w e^{i\theta(\abs{z}^2)})+O(\abs{ w}^2),$$ for some smooth real-valued function $\theta.$
 On $M$, we have
 \[ \frac{\bd\til{\delta}}{\bd w}(z,w)=\frac{1}{2}e^{i\theta(\abs{z}^2)}\]
 and
 \[ \frac{\bd^2\til{\delta}}{\bd \bar{z}\bd w}(z,w)=\frac{1}{2}e^{i\theta(\abs{z}^2)} i \theta^\prime (\abs{z}^2)z.\]
 When $|z|^2=A$, we may substitute this into \eqref{town} to obtain
 \[
   c_1=-4\pi A\theta^\prime(A).
 \]
 On $M,$
 \[\abs{\frac{\bd^2\til{\delta}}{\bd\bar{z}\bd w}(z,w)}=\frac{1}{2}\abs{\theta^\prime(\abs{z}^2)}\abs{z}, \]
 so when $|z|^2=A$ we have
\[
  \abs{c_1}=8\pi\sqrt{A}\abs{\frac{\bd^2\til{\delta}}{\bd\bar{z}\bd w}(z,w)}.
\]
 Using \eqref{eq:a_1}, our hypothesis implies that $\abs{a_1}>\pi$, so no Stein neighborhood basis exists by Theorem \ref{Bed}.

  	  	\end{proof}	

  \begin{proof}[Proof of Corollary \ref{cor:st_nbhd_existence}]
    Let $M=\{(z,w):A\leq|z|^2\leq B\}$ be an annulus in the boundary of $\Omega$.  Since the Diederich-Forn{\ae}ss Index of $\Omega$ is equal to one, we may apply Theorem \ref{main} to the annulus $M_\eps=\{(z,w):A\leq|z|^2\leq A+\eps\}$ for any $\eps>0$ and conclude that for every $\eps>0$ there exists a circle $(r_\eps e^{i\theta},w)\subset M_\eps$ on which $\frac{\partial^2\tilde\delta}{\partial w\partial\bar z}=0$.  By continuity, $\frac{\partial^2\tilde\delta}{\partial w\partial\bar z}=0$ when $|z|^2=A$.  Hence, Theorem \ref{ostnb} guarantees the existence of a Stein neighborhood basis for $\Omega$.
  \end{proof}

  	  	\begin{proof}[Proof of Theorem \ref{tntn1}]
   We define $\hat h$ as in the proof of Theorem \ref{main}.  Following the proof of that Theorem, we see that the complex Hessian of $\sigma$ is positive semi-definite on $\Omega\cap U$ for a neighborhood $U$ of $p$ in $M$ only if the matrix $\begin{pmatrix}a(p)&c(p)\\\bar c(p)&b(p)\end{pmatrix}$ is positive semi-definite, where
   \begin{align*}
     a(p)&=-4\hat{h}(p)\tau\abs{ \rho_{tw}(p)}^2|p_z|^2+\frac{ \bd^2\hat{ h}}{\bd z\bd\overline{ z}}(p),\\
     b(p)&=\frac{ 1}{4} \tau(\tau-1)\hat h(p)\text{, and}\\
     c(p)&=\hat{h}(p)\tau\rho_{t\overline{ w}}(p)\bar{p}_z+\tau\frac{ \bd\hat{ h}}{\bd z}(p)\frac{ \bd\til{\delta}}{\bd\bar{w}}(p).
   \end{align*}
   Since $\tau>1$ implies $b(p)>0$, it suffices to check $a(p)b(p)-|c(p)|^2\geq 0$.  If we again write $p_t=|z|^2$ and $\hat h=g^{1-\tau}$ for some $g>0$, this is equivalent to
  	  		\begin{equation} \label{eq:tau}
  	  		-\frac{ 1}{4}\tau(1-\tau)^2 \left(\frac{ \bd g}{\bd t}(p)+p_t\frac{ \bd^2 g}{\bd t^2}(p)\right)-\tau^3 p_t g(p) \abs{ \rho_{tw}(p)}^2\geq 0.
  	  		\end{equation}
  	  		
  	  		Our assumptions imply $\abs{\rho_{tw}(p) }>\frac{C}{t}$, so \eqref{eq:tau} implies
  	  		\[-\frac{ 1}{4}\tau(1-\tau)^2 \left(\frac{ \bd g}{\bd t}+t\frac{ \bd^2 g}{\bd t^2}\right)-\tau^3 tg\left(\frac{C^2}{t^2}\right)g> 0,
  	  		\]
  for $A\leq t\leq B$, where we have again omitted the dependence on $p$ since this no longer depends on $\arg p_z$ or $p_w$.

  	  		We will show that this implies a contradiction. Let $\tilde{g}(s)=g(e^s)$, i.e., we will use the substitution $t=e^s$ where $\log A\leq s\leq\log B$ .  Then \eqref{eq:tau} takes the form
  	  		\begin{equation}\label{eq:off}
  	  		\frac{ 1}{4}\tau(1-\tau)^2 \left(\frac{ d^2\tilde{ g} }{d s^2}\right)+\tau^3 C^2\tilde{ g} < 0.
  	  		\end{equation}
  	  		After making the substitution $u=\sqrt{\frac{ \tau^3 C^2}{\frac{ 1}{4}\tau(1-\tau)^2}}s=\frac{2\tau C}{\tau-1}s$, where $$\frac{2\tau C}{(\tau-1)}\log A<u<\frac{2\tau C}{(\tau-1)}\log B,$$ \eqref{eq:off} implies
  	  		\begin{equation} \label{eq:seva}
  	  		\frac{ d^2\til{g}}{du^2}+\tilde{g}<0.
  	  		\end{equation}
  	  		If $ \frac{2\tau C}{(\tau-1)}\log\frac{B}{A}\geq \pi$, then we obtain a contradiction as in the proof of Theorem \ref{main}.

  	  		Therefore, we must have $\frac{\tau}{\tau-1}< \frac{ \pi}{2 C\log \frac{B}{A}}$.  Since $\tau>1 $, we must have
  \[
    1-\frac{2C\log\frac{B}{A}}{\pi}>0 \text{ and }\tau>\frac{1}{1-\frac{2C\log\frac{B}{A}}{\pi}}.
    \]
  	  		
  	  		\end{proof}
\begin{proof}[Proof of Corollary \ref{simsim}]
	
	As in the proof of Corollary \ref{cor:worm_domain}, we know that
\[
  \abs{\frac{\partial^2\tilde\delta}{\partial\bar z\partial w}(z,w)}=\frac{1}{2|z|}
  \]
  on $M$.  Since we have parameterized the worm domain so that $B=r^2$ and $A=1$, then Theorem \ref{ostnb} guarantees the existence of a Stein neighborhood basis if $\frac{1}{2}<\frac{\pi}{2\log r^2}$, and no Stein neighborhood basis can exist if $\frac{1}{2}>\frac{\pi}{2\log r^2}$.  Since the hypotheses of Theorem \ref{tntn1} are satisfied for any $C<\frac{1}{2}$, the existence of a strictly positive function $h$ such that $\sigma$ is plurisubharmonic on some neighborhood of $\bar\Omega$ will imply that $\frac{\pi}{2\log r^2}\geq\frac{1}{2}$ and $\tau\geq\frac{\pi}{\pi-\log r^2}$.
	\end{proof}
	

\bibliographystyle{amsplain}
\bibliography{harrington}
\end{document}